\documentclass[12pt,a4paper,reqno]{amsart}

\usepackage[top=35mm, bottom=35mm, left=30mm, right=30mm]{geometry}
\usepackage[T1]{fontenc}
\usepackage{amsmath,amssymb,amsthm,mathtools}
\usepackage{enumitem,verbatim}
\usepackage{microtype}

\usepackage[hidelinks,pagebackref]{hyperref}
\renewcommand*{\backref}[1]{}
\renewcommand*{\backrefalt}[4]{%
\ifcase #1
Not cited%
\or
(Cited on page~#2)%
\else
(Cited on pages~#2)%
\fi
}

\usepackage[looser]{newtxtext}
\usepackage[smallerops]{newtxmath}

\newtheorem{theorem}{Theorem}[section]
\newtheorem{corollary}[theorem]{Corollary}
\newtheorem{proposition}[theorem]{Proposition}
\newtheorem{lemma}[theorem]{Lemma}
\newtheorem{question}[theorem]{Question}
\theoremstyle{definition}

\newtheorem{remark}[theorem]{Remark}

\newcommand{\N}{\mathbb N}

\newcommand{\dens}{\overline{\operatorname{dens}}}
\newcommand{\card}{\operatorname{card}}
\newcommand{\Rad}{\mathcal R}
\newcommand{\Env}{\mathcal E}

\title[Power growth of mean-L-stable operators]{\large Power Growth of mean-L-stable Operators on Banach Spaces}

\author[Jian Li]{Jian Li}
\address[Jian Li]{Institute for  Mathematical Sciences and Artificial Intelligence \& Department of Mathematics,
	Shantou University, Shantou, 515821, Guangdong, China}
\email{lijian09@mail.ustc.edu.cn}
\urladdr{https://orcid.org/0000-0002-8724-3050}

\author[Jie Li]{Jie Li$^*$}  
\address[Jie Li]{School of Mathematics and Statistics, Jiangsu Normal University, Xuzhou, Jiangsu, 221116, P.R. China}
\email{jiel0516@mail.ustc.edu.cn}
\urladdr{https://orcid.org/0000-0001-7147-4746}

\thanks{$^{*}$ Corresponding author.}

\date{\today}

\subjclass[2020]{Primary 47A35; Secondary 47A16, 47B65, 46B42}

\keywords{Mean-L-stability, power growth, absolute Ces\`aro boundedness, positive operators on Banach lattices, mean Li--Yorke chaos.}

\begin{document}

\begin{abstract}
We study the growth of powers of mean-L-stable operators on Banach spaces. We show that, for linear operators, mean-L-stability is equivalent to uniform boundedness in density; this yields $\|T^n\|=O(n)$ on every Banach space. On Hilbert spaces we prove that mean-L-stability is equivalent to absolute Ces\`aro boundedness and obtain $\|T^n\|=O(n^{1/2-c_T})$ for some $c_T>0$. For positive mean-L-stable operators on abstract $L^p$-spaces, $1\le p<\infty$, we similarly obtain $\|T^n\|=O(n^{1/p-c_T})$, where in both cases the positive constant $c_T$ cannot be chosen uniformly over all such operators.

For positive mean-L-stable operators on $p$-convex Banach lattices, we prove the bound $O(n^{1/p})$ and construct positive topologically mixing operators $T_p$ for which $\|T_p^n\|\asymp n^{1/p}$, where $1\leq p<\infty$. 
These operators satisfy a uniform weak $(p,p)$ orbit estimate, while the averages of $\|T_p^nx\|^s$ are bounded for $s<p$, of order $\log N$ for $s=p$, and of order $N^{s/p-1}$ for $s>p$. 
The operator $T_1$ is uniformly Kreiss bounded and has linear power growth, answering a question of Montes-Rodr\'iguez, S\'anchez-\'Alvarez and Zem\'anek (2005). 
Moreover, $T_1$ is mean-L-stable and mean Li--Yorke chaotic, while it is not distributionally chaotic. This answers a question of Bernardes, Bonilla and Peris (2020).
\end{abstract}

\maketitle

\section{Introduction}
The growth of powers of operators under Ces\`aro and resolvent conditions has been studied extensively. Sharp estimates under Kreiss-type conditions were obtained in \cite{EFR02,BM21}, while absolute Ces\`aro boundedness and related power-growth problems were studied in \cite{BBMP20,CCEL20,C20}. 
In particular, Cuny obtained estimates depending on the type and cotype of the underlying space, with applications to $L^p$-spaces \cite{C20}. 
More recent power-growth estimates for strongly Kreiss bounded operators on $L^p$-spaces and related classes of Banach spaces can be found in \cite{DLV24,AC26}.
These works show that both the boundedness assumption and the geometry of the underlying space influence the possible growth of $\|T^n\|$. 
We investigate the corresponding problem for mean-L-stable operators.

The concept of stability in the mean in the sense of Lyapunov, abbreviated mean-L-stability, was introduced by Fomin in connection with topological dynamical systems with pure point spectrum \cite{F51}.
It requires small perturbations to remain small except on sets of times of arbitrarily small upper density. 
On compact metric systems, Li, Tu and Ye showed that mean-L-stability is equivalent to mean equicontinuity \cite[Lemma~3.1]{LTY15}.  
For operators on a Banach space, Jiang and Li showed that absolute Ces\`aro boundedness is equivalent to mean equicontinuity \cite[Theorem~4.32]{JL25}.
By Markov's inequality, absolute Ces\`aro boundedness implies mean-L-stability. 
Both the failure of mean-L-stability and absolute Ces\`aro boundedness have proved useful in the study of distributional chaos \cite{BBMP13,JL25} and mean Li--Yorke chaos \cite{BBP20} for linear operators, as well as in the study of typical orbit behavior of hypercyclic operators \cite{L24,LWZ26}.

The boundedness of the metric is important in these comparisons.  
On a compact metric space, the density formulation of mean-L-stability and the Ces\`aro formulation of mean equicontinuity are equivalent \cite{LTY15}.  
On a Banach space the norm is unbounded.  
A set of exceptional times may have small density while the orbit is arbitrarily large at those times, so density control need not imply a Ces\`aro bound.  
We therefore retain Fomin's density condition for the norm metric.  Linearity reduces the two-point condition to the orbit of a single vector, while a Baire category argument makes the resulting estimate uniform both in the vector and over finite time intervals.  
Throughout the paper we adopt the following convention: all operators are assumed to be bounded and linear unless otherwise stated.
For an operator $T$ on a Banach space $X$ and $R>0$, put
\[
 \Phi_T(R)=\sup_{\|x\|\le 1}\sup_{N\ge 1}
 \frac1N\card\{1\le j\le N:\|T^jx\|>R\}.
\]
We prove that
\[
 T\text{ is mean-L-stable}\quad\Longleftrightarrow\quad
 \Phi_T(R)\longrightarrow0 \quad\text{as }R\to\infty.
\]
We call this property \emph{uniform boundedness in density}; it is the form of mean-L-stability used throughout the paper.

This formulation can be compared directly with the usual boundedness conditions for operators. 
An operator $T$ on a Banach space $X$ is said to be \emph{power bounded} if $\sup_{n\ge 1}\|T^n\|<\infty$.
By the Banach--Steinhaus theorem, an operator $T$ is power bounded if and only if $\sup_{n\ge 1}\|T^nx\|<\infty$ for all $x\in X$.
For $1\le p<\infty$, an operator $T$ on a Banach space $X$ is called \emph{$p$-absolutely Ces\`aro bounded} if there
exists $C > 0$ such that
\[
\sup_{n\ge 1} \frac{1}{n}\sum_{k=0}^{n-1} \|T^kx\|^p\le C \|x\|^p,\quad \forall x\in X.
\]
The case $p=1$ was introduced by Luo and Hou in \cite{LH15} under the name absolute Ces\`aro boundedness.
The case $p=2$ was studied by van Casteren \cite{V83,V97} under the name square bounded in average; it was later referred to as mean square boundedness in \cite{BBMP20} and as Ces\`aro square boundedness in \cite{CCEL20}.

By H\"older's inequality, every $p$-absolutely Ces\`aro bounded operator is $r$-absolutely Ces\`aro bounded for every $1\le r\le p$. 
For absolutely Ces\`aro bounded operators, Berm\'udez, Bonilla, M\"uller and Peris showed that $\|T^n\|=o(n)$ on every Banach space and $\|T^n\|=o(n^{1/2})$ on Hilbert spaces \cite[Corollary~2.6 and Theorem~2.4]{BBMP20}.  
Cohen, Cuny, Eisner and Lin proved that, for a $p$-absolutely Ces\`aro bounded operator $T$, one has $\|T^n\|=O(n^{1/p-\varepsilon_T})$ for some $\varepsilon_T>0$ \cite[Proposition~3.1]{CCEL20}. They also proved that absolute Ces\`aro boundedness is equivalent to Ces\`aro square boundedness for Hilbert space operators \cite[Theorem~4.4]{CCEL20}.

The notion of uniform Kreiss boundedness was introduced in \cite{MSZ05}. We use the following equivalent formulation; see \cite[Corollary~3.2]{MSZ05}.
An operator $T$ on a complex Banach space is called 
\emph{uniformly Kreiss bounded} if 
there exists a constant $C>0$ such that 
\[
\sup_{n\ge 1} \biggl\| \frac{1}{n}\sum_{k=0}^{n-1}(\lambda T)^k\biggr\|\le C, \quad \forall |\lambda|=1.
\]
It is immediate that absolute Ces\`aro boundedness implies uniform Kreiss boundedness. Moreover, every uniformly Kreiss bounded operator satisfies $\|T^n\|=O(n)$ on an arbitrary Banach space.
Montes-Rodr\'iguez, S\'anchez-\'Alvarez and Zem\'anek asked the following question in \cite[Question~3]{MSZ05}: 
\begin{question} \label{ques:UKB-O-n}
 For operators on Banach spaces, does uniform Kreiss boundedness imply a better growth estimate of the powers than the general bound $O(n)$?
\end{question}

In \cite{AS16}, Aleman and Suciu further asked whether $\|T^n\|=o(n)$ whenever $T$ is a uniformly Kreiss bounded operator on a Banach space. Berm\'udez, Bonilla, M\"uller and Peris restated this question in \cite[Question~1.1]{BBMP20} and proved in \cite[Theorem~2.3]{BBMP20} that every uniformly Kreiss bounded operator on a Hilbert space satisfies $\|T^n\|=o(n)$.
Cuny proved in \cite[Theorem~3.1]{C20} that every (uniformly) Kreiss bounded operator on a Banach space with the unconditional martingale differences property satisfies $\|T^n\|=o(n)$.  
On the other hand, uniformly Kreiss bounded operators on Hilbert spaces may have powers of order $n^{1-\varepsilon}$ for any fixed $\varepsilon>0$ \cite[Theorem~4]{BM21}.   

We first place mean-L-stability among the standard boundedness conditions for operators. 
Power boundedness implies absolute Ces\`aro boundedness, which in turn implies uniform Kreiss boundedness \cite{BBMP20}, and neither converse holds in general. 
Markov's inequality also shows that absolute Ces\`aro boundedness implies mean-L-stability, while Theorem~\ref{thm:critical-family-main}, through the operator $T_1$, shows that the converse fails. 
Moreover, the examples in \cite[Theorem~4]{BM21}, together with our Hilbert space growth estimate, show that uniform Kreiss boundedness does not imply mean-L-stability even on Hilbert spaces. 
Under additional geometric or order assumptions, however, some of these conditions become equivalent.

\begin{theorem}\label{thm:relationships}
\textup{(i)} For a positive operator on an AM-space, power boundedness, mean-L-stability and absolute Ces\`aro boundedness are equivalent.

\smallskip 
\textup{(ii)} For an operator on a Hilbert space or a positive operator on an AL-space, mean-L-stability is equivalent to absolute Ces\`aro boundedness.

\smallskip 
\textup{(iii)} For a positive operator on a complex Banach lattice, mean-L-stability implies uniform Kreiss boundedness.
\end{theorem}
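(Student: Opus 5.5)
Throughout I use the characterization stated in the introduction: $T$ is mean-L-stable if and only if $\Phi_T(R)\to0$ as $R\to\infty$ (uniform boundedness in density). Fix $R_0$ with $\Phi_T(R_0)<1/2$. Then for every $\|x\|\le1$ and every $N\ge1$, at least half of the indices $1\le j\le N$ satisfy $\|T^jx\|\le R_0$. The plan is to combine this ``good times are frequent'' property with the order or Hilbert structure in each part. Throughout, absolute Ces\`aro boundedness implies mean-L-stability by Markov's inequality, and power boundedness implies absolute Ces\`aro boundedness trivially. So only the reverse directions need work.

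\textbf{(i)} Let $X$ be an AM-space and $T\ge0$. The key observation is that a positive operator on an AM-space attains its norm on positive elements, and that the norm is a lattice maximum. For $0\le x$, $\|x\|\le 1$, and $n$ fixed, I would consider the vectors $y_j=T^{j}x$. The idea is to use the AM-property $\|\bigvee_i u_i\|=\max_i\|u_i\|$ for positive $u_i$. If $\|T^nx\|>R$, then for every $0\le k\le n$ we have $T^n x = T^{n-k}T^kx$, hence $\|T^k x\|\ge \|T^nx\|/\|T^{n-k}\|$. That inequality is too weak, so I would use the AM-structure in a different way. Take $u=\bigvee_{k=0}^{n} T^kx/\|T^k\|$-type combinations, or more simply $z=\bigvee_{k\le n}T^kx$, which satisfies $\|z\|\le$ something.

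The cleanest route I would try is a contradiction argument. Suppose $\|T^n\|$ is unbounded. Pick $n$ and a positive unit $x$ with $\|T^nx\|>2R_0\cdot M$. Put $w=\frac1{n}\bigvee_{k=1}^{n}$ of suitable translates, so that its orbit stays large on a long interval. Concretely, set $w=\bigvee_{k=0}^{m}T^kx_k$ with $\|w\|\le\max\|x_k\|\le1$ by the AM-property. Positivity gives $T^jw\ge T^{j+k}x_k$ for each $k$. So if each $x_k$ has a large value at time $n_k$, then $w$ has large orbit at all times $n_k-k$. Choosing the $x_k$ so that these times fill a density-one-half block contradicts $\Phi_T(R_0)<1/2$. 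This step, making the large times cover a positive proportion of some $[1,N]$ using unboundedness of $\|T^n\|$, is the main obstacle. It works because of the sup-norm structure: large values at different times can be glued into a single unit vector.

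\textbf{(ii)} On an AL-space, norm is additive on the positive cone. For $x\ge0$ with $\|x\|\le1$, I would decompose $\sum_{k<N}\|T^kx\|=\|\sum_{k<N}T^kx\|$, so absolute Ces\`aro boundedness on positive vectors is the same as Ces\`aro boundedness of $T$. Uniform boundedness in density gives a lower bound on the number of good times. Summing over shifted starting points $T^mx$ gives a bound on $\|\sum_{k<N}T^k\|$: for each $j$, write $\sum_{k<N}T^k$ as an average over good intermediate times $j$ and use additivity. The general case follows by splitting $x=x^+-x^-$ and using $|T^kx|\le T^k|x|$. For a Hilbert space, I would instead invoke the cited equivalence of absolute Ces\`aro boundedness with Ces\`aro square boundedness \cite{CCEL20}. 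I would then show that mean-L-stability gives Ces\`aro square boundedness through an orthogonality and Baire-uniform estimate using the uniform $\Phi_T$.

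\textbf{(iii)} Let $X$ be a complex Banach lattice and $T\ge0$. We have $|\frac1n\sum(\lambda T)^kx|\le\frac1n\sum T^k|x|$. So it suffices to show that $\|\frac1n\sum_{k<n}T^k\|$ is bounded, i.e.\ that $T$ is Ces\`aro bounded. This reduces to the positive-cone argument of (ii), with the AL-additivity replaced by the triangle inequality and the density estimate applied to $\frac1n\sum T^k x$.
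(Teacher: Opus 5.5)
\paragraph{Verdict.} The proposal does not prove any of the three nontrivial implications. Your reductions are correct and agree with the paper: Markov's inequality for the easy directions, $|T^kx|\le T^k|x|$ to pass to $X_+$, AL-additivity turning absolute Ces\`aro boundedness into Ces\`aro boundedness on $X_+$, and $|\sum\gamma_kT^kx|\le\sum T^k|x|$ for (iii). The gaps are in the hard steps.

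\paragraph{Part (i).} Your glued vector $w=\bigvee_kT^kx_k$ has $\|w\|=\max_k\|T^kx_k\|$ by the AM-property, not $\|w\|\le\max_k\|x_k\|$. So your norm control is false, and the step you call the ``main obstacle'' is exactly the missing idea. The fix is to glue one orbit with itself over its good times. For a positive unit $x$ and fixed $n$, let $G=\{0\le k<n:\|T^kx\|\le R_0\}$ with $R_0\ge1$. Applying $\Phi_T(R_0)<1/2$ with $N=n-1$ gives $\card(G)>n/2$. Then $y=R_0^{-1}\bigvee_{k\in G}T^kx$ lies in the unit ball, and $T^{n-k}y\ge R_0^{-1}T^nx$ for every $k\in G$. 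Hence $\|T^nx\|>R_0^2$ would give more than $n/2$ times $j\le n$ with $\|T^jy\|>R_0$, a contradiction. This yields $\|T^n\|\le R_0^2$ directly. It is the smallness of the orbit at good times that keeps the glued vector in the unit ball.

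\paragraph{AL case and part (iii).} ``Average over good intermediate times'' points at the right vector: $h=\frac1{R\card(G)}\sum_{j\in G}T^jx$ with $G=\{0\le j<M:\|T^jx\|\le R\}$, so $\|h\|\le1$. What is missing is the step converting a large dyadic block $B=\frac1M\bigl\|\sum_{k=M}^{2M-1}T^kx\bigr\|>4R^2$ into many large values of $\|T^nh\|$. This is a double count. Let $b_n$ be the part of $T^nh$ with $M\le n+j<2M$. Then $\sum_{n=1}^{2M-1}b_n=\frac1R\sum_{k=M}^{2M-1}T^kx$ and $0\le b_n\le\frac1{R\card(G)}\sum_{k=M}^{2M-1}T^kx$. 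This forces more than $M/5$ indices $n\le 2M-1$ with $\|T^nh\|\ge\|b_n\|>B/(4R)>R$, contradicting $\Phi_T(R)<1/10$.

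The paper runs this on the scalars $\langle x^*,T^kx\rangle$ for a positive norming functional $x^*$. Your triangle inequality can replace additivity only in the form $\sum_n\|b_n\|\ge\|\sum_nb_n\|$, together with lattice monotonicity. Applying the density estimate to $\frac1n\sum_kT^kx$, as you propose for (iii), cannot work: its norm is the very quantity you are trying to bound.

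\paragraph{Hilbert case.} Here nothing is actually proved. \cite{CCEL20} supplies the direction you do not need. Baire is only needed to get $\Phi_T(R)\to0$, which you already assume. Without positivity you cannot glue by suprema or positive sums; the paper glues with random signs.
\begin{itemize}
\item Set $h=\frac1{R\sqrt g}\sum_{j\in G}\varepsilon_jT^jx$, which has $\mathbb E\|h\|^2\le1$ by orthogonality.
\item Run the same double count on $a_n=\mathbb E\|T^nh\|^2$ over the block $[M,2M)$.
\item Use the bound $\mathbb E\|\sum_i\varepsilon_iv_i\|^4\le3(\sum_i\|v_i\|^2)^2$ with Paley--Zygmund to get $\mathbb P(\|T^nh\|^2>a_n/2)\ge1/12$.
\item Fix one choice of signs with $\|h\|\le4$ and more than $M/192$ large times; this contradicts $\Phi_T(R)<2^{-10}$ once the block average of $\|T^kx\|^2$ exceeds $128R^4$.
\end{itemize}
This gives Ces\`aro square boundedness, and hence absolute Ces\`aro boundedness by Cauchy--Schwarz.
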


Our second main result establishes power-growth bounds under mean-L-stability.
\begin{theorem}\label{thm:main-growth}
Let $T$ be a mean-L-stable operator on a Banach space $X$.
\begin{enumerate}[label=\textup{(\roman*)}]
\item If $X$ has Rademacher type $p$, then
\[
 \|T^n\|=O(n^{1/p}).
\]
In particular, $\|T^n\|=O(n)$ on every Banach space.
\item If $X$ is a Hilbert space, then there is $c_T\in(0,1/2)$ such that
\[
 \|T^n\|=O(n^{1/2-c_T}).
\]
\item If $X$ is a $p$-convex Banach lattice for $1\le p<\infty$ and $T$ is positive, then
\[
 \|T^n\|=O(n^{1/p}).
\]
If, in addition, $X$ is an abstract $L^p$-space, then there is $c_T\in(0,1/p)$ such that
\[
 \|T^n\|=O(n^{1/p-c_T}).
\]
\end{enumerate}
\end{theorem}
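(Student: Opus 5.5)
The plan is to run one mechanism, driven by uniform boundedness in density, in several settings. By the equivalence stated in the introduction, $\Phi_T(R)\to0$ as $R\to\infty$. Fix $R$ with $\Phi_T(R)\le 1/2$ and $M$ with $\Phi_T(M)<1/16$. Given $n$, pick $\|x\|\le1$ with $\|T^nx\|\ge\|T^n\|/2$, and let $S=\{1\le k\le n:\|T^kx\|\le R\}$, so that $|S|\ge n/2$. The idea is to combine the vectors $T^kx$, $k\in S$, into a single vector $y$ with two properties: $\|y\|\lesssim n^{1/p}R$, and $T^{n-j}y$ "contains" $T^jT^{n-j}x$-type mass, so that $\|T^{n-j}y\|\ge\|T^nx\|$ for many $j\in S$. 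Normalizing $z=y/\|y\|$ then produces at least $cn$ times $m\le n$ with
\[
\|T^mz\|\ge \frac{\|T^n\|}{2Kn^{1/p}R}.
\]
If $\|T^n\|>2KMRn^{1/p}$, this forces $\Phi_T(M)\ge c$. The resulting contradiction gives $\|T^n\|=O(n^{1/p})$.

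\textbf{(i), type $p$.} Take the random vector $y_\varepsilon=\sum_{k\in S}\varepsilon_kT^kx$ with Rademacher signs. By type $p$,
\[
\mathbb E\|y_\varepsilon\|^p\le C^p\sum_{k\in S}\|T^kx\|^p\le C^pnR^p,
\]
so Markov's inequality gives $\|y_\varepsilon\|\le Kn^{1/p}R$ with probability $\ge 7/8$. For fixed $j\in S$, write
\[
T^{n-j}y_\varepsilon=\varepsilon_jT^nx+w,
\]
where $w$ does not depend on $\varepsilon_j$. Since $\max(\|w+b\|,\|w-b\|)\ge\|b\|$, the event $\|T^{n-j}y_\varepsilon\|\ge\|T^nx\|$ has probability $\ge1/2$. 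Hence the expected number of $j\in S$ for which it holds is $\ge n/4$, and since this count is at most $n$, it is $\ge n/8$ with probability $\ge1/7$. Because $1/7+7/8>1$, some sign choice satisfies both requirements. This yields the scheme above with $c=1/8$. Every Banach space has type $1$, so $O(n)$ holds in general.

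\textbf{(iii), $p$-convex lattices, first bound.} Here no randomness is needed. Set
\[
y=\Bigl(\sum_{k\in S}|T^kx|^p\Bigr)^{1/p}\in X,
\]
defined via the Krivine functional calculus. By $p$-convexity, $\|y\|\le M^{(p)}(X)\,n^{1/p}R$. For a positive operator $U$ one has $U\bigl(\sum|u_k|^p\bigr)^{1/p}\ge\bigl(\sum|Uu_k|^p\bigr)^{1/p}$, since the left side is a supremum of linear combinations $\sum a_ku_k$ with $\sum|a_k|^{p'}\le1$. Consequently
\[
T^{n-j}y\ge |T^nx| \quad\text{for every } j\in S,
\]
so $\|T^{n-j}y\|\ge\|T^nx\|$ deterministically for at least $n/2$ times. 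The scheme then gives $O(n^{1/p})$.

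\textbf{Improved exponents in (ii) and (iii).} For (ii), by Theorem~\ref{thm:relationships}(ii) a mean-L-stable Hilbert space operator is absolutely Ces\`aro bounded. By \cite[Theorem~4.4]{CCEL20} it is then Ces\`aro square bounded, and \cite[Proposition~3.1]{CCEL20} with $p=2$ gives $\|T^n\|=O(n^{1/2-c_T})$. For positive $T$ on an abstract $L^p$-space, I would aim to prove the analogue of Theorem~\ref{thm:relationships}(ii): such $T$ is $p$-absolutely Ces\`aro bounded. Then \cite[Proposition~3.1]{CCEL20} again yields $O(n^{1/p-c_T})$. To prove this, I would use the exact identity
\[
\Bigl\|\Bigl(\sum|f_k|^p\Bigr)^{1/p}\Bigr\|^p=\sum\|f_k\|^p
\]
in $L^p$, together with the positivity inequality above. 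Suppose $\frac1N\sum_{k<N}\|T^kx\|^p$ were huge. I would build $y=\bigl(\sum_k a_k|T^kx|^p\bigr)^{1/p}$ with weights chosen so that $\|y\|$ is controlled while $\|T^my\|$ is large on a set of times of positive density, mirroring the AL-space argument of part (ii). This step is the main obstacle. In the first bound the exceptional set of times was simply discarded, whereas here we need full $\ell^p$-summability of the orbit. My first attempt would be a dyadic decomposition of the levels of $\|T^kx\|$, with a Baire/uniformity argument applied at each level.
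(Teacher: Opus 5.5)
The gap is the strict exponent $n^{1/p-c_T}$ for positive $T$ on abstract $L^p$-spaces; the other parts are fine. Your arguments for \textup{(i)} and for the $O(n^{1/p})$ bound in \textup{(iii)} are correct and essentially the paper's. The paper uses $\Rad_X(n)$ and a norming functional instead of the moment form of type $p$ and the triangle inequality. In the lattice case it takes $\bigvee_j u_j$ instead of the $p$-sum, with the same bound $\Env_X(n)\le M_p(X)n^{1/p}$. Your Hilbert-space chain of citations in \textup{(ii)} is legitimate. The paper mentions that route and then gives a direct proof via Proposition~\ref{prop:Hilbert-square} and Lemma~\ref{lem:reciprocal-sequence}.

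For the $L^p$ case, you reduce everything to ``positive mean-L-stable $\Rightarrow$ $p$-absolutely Ces\`aro bounded'' but never prove it. Your suggested dyadic level decomposition with a Baire argument at each level is not an argument and is not what is needed. Your diagnosis that you ``need full $\ell^p$-summability'' is also misleading. The bad times only have to be left out of the test vector; the whole orbit block is then reached by averaging over shifts.

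The claim is in fact true, by the $p$-analogue of the AL argument in Proposition~\ref{prop:positive-lattice-UKB}. Take $R\ge1$ with $\Phi_T(R)<1/10$, $x\ge0$ with $\|x\|=1$, and $M\ge1$. Let $G=\{0\le j<M:\|T^jx\|\le R\}$, so $\card(G)>9M/10$, and put $h=\bigl((R^p\card(G))^{-1}\sum_{j\in G}(T^jx)^p\bigr)^{1/p}$.

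The identity $\|(\sum_i y_i^p)^{1/p}\|^p=\sum_i\|y_i\|^p$ gives $\|h\|\le1$. Your positivity inequality and the same identity give $\|T^nh\|^p\ge b_n:=(R^p\card(G))^{-1}\sum_{j\in G,\ M\le n+j<2M}\|T^{n+j}x\|^p$. Set $B=M^{-1}\sum_{k=M}^{2M-1}\|T^kx\|^p$. Double counting gives $\sum_{n=1}^{2M-1}b_n=MB/R^p$, while each $b_n<10B/(9R^p)$. Hence more than $M/5$ indices $n\le 2M-1$ satisfy $\|T^nh\|^p>B/(4R^p)$, which contradicts $\Phi_T(R)<1/10$ unless $B\le4R^{2p}$. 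Summing dyadic blocks and using $|T^kx|\le T^k|x|$ gives $p$-absolute Ces\`aro boundedness, and \cite[Proposition~3.1]{CCEL20} then completes your route.

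The paper never establishes $p$-absolute Ces\`aro boundedness. In Theorem~\ref{thm:positive-reciprocal} it picks a single good time $j$ for $w=(\sum_{k\in G}(T^kx)^p)^{1/p}$. Combining this with $\|T^Nx\|\le\|T^r\|\,\|T^{N-r}x\|$ and the lower estimate $\mathfrak C_{p,X}$, it gets $\|T^N\|^p\sum_{r\le N/8}\|T^r\|^{-p}\le CN$. Lemma~\ref{lem:reciprocal-sequence} then gives the power saving directly. That route also yields $o(n^{1/p})$ whenever $\mathfrak C_{p,X}(m)^p=o(\log(m+1))$.
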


For comparison, Cuny \cite{C20} obtained power-growth estimates for absolutely Ces\`aro bounded operators in terms of the type and cotype of the underlying Banach space. In particular, on a Banach space of Rademacher type $p$, absolute Ces\`aro boundedness implies $\|T^n\|=O(n^{1/p})$. Theorem~\ref{thm:main-growth}\textup{(i)} shows that this estimate persists under the weaker assumption of mean-L-stability. Indeed, absolute Ces\`aro boundedness implies mean-L-stability, while the converse fails in general, as shown below. Our proof uses only uniform control of the density of large orbit values, rather than uniform bounds on Ces\`aro averages of orbit norms.

The positive constants $c_T$ in Theorem~\ref{thm:main-growth}\textup{(ii)} and in the abstract $L^p$ case of Theorem~\ref{thm:main-growth}\textup{(iii)} cannot be chosen uniformly. Indeed, for every $0<\gamma<1/p$, the classical weighted shifts on $\ell^p$ recalled in Section~\ref{sec:critical} are absolutely Ces\`aro bounded and satisfy $\|T^n\|\asymp n^\gamma$. Since $\gamma$ can be arbitrarily close to $1/p$, there is no $c>0$ such that $\|T^n\|=O(n^{1/p-c})$ holds for all such operators. Taking $p=2$ gives the same conclusion for Hilbert spaces.
 
 \medskip
The order $n^{1/p}$ in the $p$-convex part of Theorem~\ref{thm:main-growth}\textup{(iii)} is sharp, as shown by the following family of examples.  

\begin{theorem}\label{thm:critical-family-main}
For $1\le p<\infty$, let
\[
 I_j=\{2^j,\ldots,2^{j+1}-1\}\text{ and }X_p=\biggl(\bigoplus_{j\ge 0}\ell^p(I_j;\mathbb C)\biggr)_{c_0}.
\]
Let $T_p$ be the unilateral weighted backward shift on $X_p$ with weights $\bigl(\frac{m+1}{m}\bigr)^{1/p}$, that is 
\[
 (T_px)_m=\left(\frac{m+1}{m}\right)^{1/p}x_{m+1}.
\]
Then $X_p$ and $T_p$ have the following properties:
\begin{enumerate}[label=\textup{(\roman*)}]
    \item The space $X_p$ is $p$-convex with constant one, and $T_p$ is a positive topologically mixing operator satisfying
    \[
     (n+1)^{1/p}\le\|T_p^n\|\le2^{1/p}(n+1)^{1/p},\quad \forall n\ge  1.
    \]
    \item There is an absolute constant $C_0>0$ such that
\[
 \frac1N\card\{1\le n\le N:\|T_p^nx\|>\lambda\}
 \le C_0\left(\frac{\|x\|}{\lambda}\right)^p
\]
for all $x\in X_p$, $\lambda>0$ and $N\ge 1$.
In particular, each $T_p$ is  mean-L-stable.
\item For every $s>0$, 
\[
 \sup_{\|x\|\le 1}\frac1N\sum_{n=1}^N\|T_p^nx\|^s
 \asymp_{p,s}
 \begin{cases}
 1,&0<s<p,\\
 \log(N+1),&s=p,\\
 N^{s/p-1},&s>p.
 \end{cases}
\]
In particular, each $T_p$ is $s$-absolutely Ces\`aro bounded if and only if $1\le  s<p$.
\end{enumerate}
\end{theorem}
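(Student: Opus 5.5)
The plan is to work from the explicit formula for the powers. Iterating the weights gives
\[
 (T_p^nx)_m=\Bigl(\frac{m+n}{m}\Bigr)^{1/p}x_{m+n},
 \qquad
 \|T_p^nx\|^p=\sup_{j\ge0}\sum_{m\in I_j}\frac{m+n}{m}\,|x_{m+n}|^p .
\]
Every estimate below will come from this identity. A recurring fact is that $\card I_j=2^j$, so a window of $2^j$ consecutive indices starting at or beyond $2^j$ meets at most two blocks $I_i$.

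\emph{Part (i).} For $p$-convexity, note that $\ell^p$ norms are taken inside blocks and the supremum over blocks is outside. Hence $\|(\sum_i|x_i|^p)^{1/p}\|=\sup_j(\sum_i\|x_i|_{I_j}\|^p)^{1/p}\le(\sum_i\|x_i\|^p)^{1/p}$, which is $p$-convexity with constant one. Positivity of $T_p$ is clear.

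For the lower norm bound, test on $e_{n+1}$ at $m=1$, which gives $\|T_p^ne_{n+1}\|\ge(n+1)^{1/p}$.

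For the upper norm bound, take $\|x\|\le1$. For $m\in I_j$ we have $(m+n)/m\le 1+n/2^j\le n+1$. The indices $m+n$ with $m\in I_j$ form a window of length $2^j$ contained in at most two blocks, so that window carries $\ell^p$-mass at most $2$. This gives $\|T_p^n\|^p\le2(n+1)$.

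For topological mixing, use the Kitai/Gethner--Shapiro criterion on the dense set of finitely supported vectors. The map $S e_m=(m/(m+1))^{1/p}e_{m+1}$ is a right inverse of $T_p$, and $\|S^ne_m\|=(m/(m+n))^{1/p}\to0$. Also $T_p^n$ annihilates finitely supported vectors eventually.

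\emph{Part (ii), the main step.} Normalize $\|x\|=1$ and put $a_k=|x_k|^p$, so each block has $\sum_{k\in I_i}a_k\le1$.

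Fix a dyadic range $n\in[2^i,2^{i+1})$. Blocks $j$ with $2^j\ge n$ contribute at most $2\cdot2=4$ to $\|T_p^nx\|^p$. Blocks with $2^j<n$ contribute at most
\[
\frac{2n}{2^j}\sum_{k=n+2^j}^{n+2^{j+1}-1}a_k\le 4n\,\mathcal M a(n),
\]
where $\mathcal M$ is the one-sided (forward) Hardy--Littlewood maximal operator. Since $2^{j+1}\le 2n$, the relevant indices satisfy $k\le 3n$. So we may replace $a$ by its restriction $\tilde a$ to $[2^i,2^{i+3})$, which has $\|\tilde a\|_1\le3$.

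For $\lambda^p>8$, the condition $\|T_p^nx\|>\lambda$ then forces $\mathcal M\tilde a(n)\gtrsim\lambda^p/2^i$. The weak $(1,1)$ inequality for $\mathcal M$ on $\mathbb Z$ bounds the number of such $n$ in the dyadic range by $C2^i/\lambda^p$. Summing over $i\le\log_2N$ gives $\le C_0N\lambda^{-p}$. The case $\lambda^p\le8$ is trivial after enlarging $C_0$.

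Mean-L-stability then follows because $\Phi_{T_p}(R)\le C_0R^{-p}\to0$, together with the equivalence with uniform boundedness in density stated in the introduction. I expect the main obstacle to be recognizing the maximal-function structure: a naive union bound over $j$ loses a factor $\log n$.

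\emph{Part (iii).} For the upper bounds, use the layer-cake formula
\[
\frac1N\sum_{n\le N}\|T_p^nx\|^s=\int_0^{\infty}s\lambda^{s-1}\,\frac1N\card\{n\le N:\|T_p^nx\|>\lambda\}\,d\lambda .
\]
Bound the density factor by $\min(1,C_0\lambda^{-p})$ from (ii). It vanishes for $\lambda>2^{1/p}(N+1)^{1/p}$, since $\|T_p^n\|\le 2^{1/p}(N+1)^{1/p}$ for $n\le N$ by (i). Integrating gives $O(1)$ for $s<p$, $O(\log N)$ for $s=p$, and $O(N^{s/p-1})$ for $s>p$.

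For the lower bounds, use $x=e_{N+1}$. Then $\|T_p^nx\|^p=(N+1)/(N+1-n)\ge1$, which gives average at least $1$ for every $s$. Its $s/p$-th powers summed over $n\le N$ give $\asymp N\log N$ when $s=p$ and $\asymp N^{s/p}$ when $s>p$, matching the upper bounds.

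The final characterization follows directly: $T_p$ is $s$-absolutely Ces\`aro bounded exactly when $1\le s<p$.
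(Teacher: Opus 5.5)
Your proposal is correct and follows essentially the paper's proof: the same explicit power formula, test vector $e_{n+1}$ and two-block window estimate for (i), the same one-sided maximal-function argument with restriction to $[2^i,2^{i+3})$ and the weak $(1,1)$ bound on dyadic time blocks for (ii), and the same layer-cake and $e_{N+1}$ arguments for (iii). The only differences are cosmetic: you work with $a_k=|x_k|^p$ directly, where the paper proves $p=1$ first and transfers via $\Gamma_p(x)=(|x_m|^p)_m$, and you cite Kitai's criterion where the paper checks mixing by hand with the same right inverse.
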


Thus the general $p$-convex estimate has the optimal order $n^{1/p}$. For the same operators, the averages of $\|T_p^n x\|^s$ are uniformly bounded when $s<p$, have order $\log N$ when $s=p$, and have order $N^{s/p-1}$ when $s>p$. In particular, $T_1$ is mean-L-stable and has linear power growth, while the Ces\`aro averages of its orbit norms are not uniformly bounded.

\begin{remark}
The operator $T_1$ in Theorem~\ref{thm:critical-family-main} is a positive mean-L-stable operator on a complex Banach lattice. By Theorem~\ref{thm:relationships}\textup{(iii)}, $T_1$ is uniformly Kreiss bounded. However,
\[
n+1\le  \|T_1^n\|\le  2(n+1) \quad(n\ge 1).
\]
Thus the general $O(n)$ power-growth bound for uniformly Kreiss bounded operators cannot be improved.
\end{remark}

Consequently, we obtain a negative answer to Question~3 of \cite{MSZ05} and to the later more specific question of Aleman and Suciu \cite{AS16}; see also Question~\ref{ques:UKB-O-n}.

\begin{proposition}
There exists a uniformly Kreiss bounded operator $T$ on a Banach space $X$ with $\|T^n\| \asymp n$.
\end{proposition}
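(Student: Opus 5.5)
The plan is to take $T=T_1$ on $X=X_1$ from Theorem~\ref{thm:critical-family-main} and combine three facts already stated. Part (i) of Theorem~\ref{thm:critical-family-main} with $p=1$ gives $n+1\le\|T_1^n\|\le 2(n+1)$ for $n\ge1$, hence $\|T_1^n\|\asymp n$. Part (ii) gives mean-L-stability of $T_1$. Part (i) also says $X_1$ is a Banach lattice (a $c_0$-sum of complex $\ell^1$-blocks, so a complex Banach lattice) on which $T_1$ is positive. Theorem~\ref{thm:relationships}(iii) then shows that $T_1$ is uniformly Kreiss bounded, which is exactly the statement.

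In order, I would first check that $X_1$ is a complex Banach lattice under the coordinatewise order. This holds because $c_0$-sums of $\ell^1$ spaces are Banach lattices with the lattice norm $\sup_j\sum_{m\in I_j}|x_m|$, and complexification is coordinatewise. Positivity of $T_1$ is immediate, since its weights $(m+1)/m$ are positive. Boundedness of $T_1$ on $X_1$ is part of Theorem~\ref{thm:critical-family-main}. The shift moves coordinates across block boundaries, but $\|T_1\|\le 2\cdot 2$ follows from (i) with $n=1$. After these checks I would simply quote (i), (ii) and Theorem~\ref{thm:relationships}(iii).

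The real obstacle is hidden in the cited results, mainly Theorem~\ref{thm:relationships}(iii). If I had to justify it directly for this particular $T_1$, I would use positivity to get $|\sum_k \lambda^k T^k x|\le \sum_k T^k|x|$. This reduces the uniform Kreiss bound to bounding $\|\frac1n\sum_{k<n}T^k y\|$ for $y\ge0$, that is, to the Ces\`aro boundedness of a positive operator. For $T_1$ one computes explicitly that $(T_1^k y)_m=\frac{m+k}{m}y_{m+k}$. So
\[
\Bigl(\frac1n\sum_{k<n}T_1^ky\Bigr)_m=\frac1{nm}\sum_{k<n}(m+k)y_{m+k}.
\]
I would estimate the $\ell^1(I_j)$-norm of this vector by swapping sums. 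Each $y_r$ with $r$ in block $I_i$ is counted at most for $m\in[r-n+1,r]$, with weight $r/(nm)$. Summing $r/(nm)$ over $m\in I_j\cap[r-n,r]$ gives $O(1)$ when $|I_j|\lesssim r$. A short case analysis is needed: when $n\ll 2^j$ there is essentially one relevant block, and when $n\gtrsim 2^j$ the factor $1/n$ controls the roughly $\log$-many blocks $I_i$ with $2^i\le 2^j+n$, since each contributes at most $\frac{2^{i}}{n}\cdot\frac{r}{m}$-type weights summing geometrically. I expect the result to be $\|\frac1n\sum_{k<n}T_1^k\|\le C$, and this geometric summation over blocks is the step where the constants have to be watched.
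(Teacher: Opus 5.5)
Your proposal is correct and is exactly the paper's argument: $T_1$ on $X_1$ is a positive mean-L-stable operator on a complex Banach lattice, so Theorem~\ref{thm:relationships}(iii) gives uniform Kreiss boundedness, and Theorem~\ref{thm:critical-family-main}(i) gives $n+1\le\|T_1^n\|\le 2(n+1)$. Your optional direct Ces\`aro estimate for $T_1$ is not needed, because Proposition~\ref{prop:positive-lattice-UKB} already derives that bound from mean-L-stability.
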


We conclude the introduction with an application to linear dynamics. Distributional chaos and mean Li--Yorke chaos are two important notions of chaos for linear operators; see \cite{LY16} for related background. It was shown in \cite[Theorem~25]{BBPW18} that there exists an operator on $\ell^p$ which is distributionally chaotic but not mean Li--Yorke chaotic. Bernardes, Bonilla and Peris asked in \cite[Question~16]{BBP20} whether the opposite separation can occur:

\begin{question}\label{ques:mean-chaos}
Is there a Banach (or Hilbert) space operator which is mean Li--Yorke chaotic but is not distributionally chaotic?
\end{question}

The operator $T_1$ in Theorem~\ref{thm:critical-family-main} is mean-L-stable but not absolutely Ces\`aro bounded. Proposition~\ref{prop:T1-chaos} shows that $T_1$ is mean Li--Yorke chaotic, while every pair is distributionally asymptotic. 
In particular, $T_1$ is not distributionally chaotic, and hence Question~\ref{ques:mean-chaos} has an affirmative answer on Banach spaces. 
In contrast, using Theorem~\ref{thm:relationships}, we prove in Corollary~\ref{cor:Hilbert-MLY-DC} that every mean Li--Yorke chaotic operator on a Hilbert space is distributionally chaotic. 
Thus no such example exists on a Hilbert space.

\medskip
The paper is organized as follows. In Section~\ref{sec:prelim}, we recall the Banach-space and Banach-lattice notions used below and establish convenient uniform characterizations of mean-L-stability. In Section~\ref{sec:relations}, we compare mean-L-stability with power boundedness, absolute Ces\`aro boundedness and uniform Kreiss boundedness under Hilbert-space and lattice assumptions. In Section~\ref{sec:power-growth}, we prove the general power-growth estimates and their refinements on Hilbert spaces and positive $p$-convex Banach lattices. In Section~\ref{sec:critical}, we construct the operators $T_p$ and establish their sharp power growth, weak orbit estimates and Ces\`aro behavior. Finally, in Section~\ref{sec:chaos}, we study the relation between mean Li--Yorke chaos and distributional chaos and apply the preceding results to the questions discussed above.

\section{Preliminaries}\label{sec:prelim}

\subsection{Banach spaces and Banach lattices} 
In this subsection, we recall the Banach-space and Banach-lattice notions used below. 
We refer to \cite{AK16}, \cite{K85} and \cite{M91} for standard references.

For a Banach space $X$ and $n\in\mathbb N$, define
\[
\Rad_X(n)=\sup_{x_1,\ldots,x_n\in B_X}\mathbb E\biggl\|\sum_{j=1}^n\varepsilon_jx_j\biggr\|,
\]
where $B_X=\{x\in X:\|x\|\le 1\}$ and $\varepsilon_1,\ldots,\varepsilon_n$ are independent Rademacher random variables, each taking the values $-1$ and $1$ with probability $1/2$. We shall also refer to the $\varepsilon_j$ as Rademacher signs.
If a Banach space $X$ has Rademacher type $p\in [1,2]$, then there exists a constant $C>0$ such that
$\Rad_X(n)\le Cn^{1/p}$ for all $n\ge  1$.

For a Banach lattice $X$, let $X_+=\{x\in X:x\ge 0\}$ denote its positive cone. For $n\in\mathbb N$, define
\[
 \Env_X(n)=\sup_{x_1,\ldots,x_n\in B_X\cap X_+}\|x_1\vee\cdots\vee x_n\|.
\]
We use the standard lattice functional calculus throughout the paper.  For $1\le p<\infty$,  
a Banach lattice $X$ is called \emph{$p$-convex} if there is $M_p(X)\ge 1$ such that
\[
 \biggl\|\biggl(\sum_{j=1}^m|x_j|^p\biggr)^{1/p}\biggr\|
 \le M_p(X)\biggl(\sum_{j=1}^m\|x_j\|^p\biggr)^{1/p}
\]
for every finite family $x_1,\dotsc,x_m\in X$.  
Standard examples of $p$-convex Banach lattices are $L^p(\mu)$ and $\ell^p$.
An \emph{AM-space} is a Banach lattice satisfying
\[
 \|x\vee y\|=\max\{\|x\|,\|y\|\},
 \quad \forall x,y\in X_+.
\]
Typical examples of AM-spaces are spaces with a supremum-type norm, such as $C(K)$ and $c_0$.
It follows directly that every AM-space is $p$-convex for every $1\le p<\infty$ with $p$-convexity constant $1$.
A Banach lattice $X$ is called an \emph{abstract $L^p$-space} if, for every $x,y\in X_+$ with $x\wedge y=0$,
\[ \|x+y\|^p = \|x\|^p +\|y\|^p.  
\]
A Banach lattice $X$ is called an \emph{AL-space} if the norm is additive on the positive cone, that is,
\[
    x,y\in X_+\implies \|x+y\|=\|x\|+\|y\|.
\]
The spaces $L^1(\mu)$ and $\ell^1$ are standard examples of AL-spaces.

For a $p$-convex Banach lattice $X$ and $m\ge 1$, put
\[
 \mathfrak C_{p,X}(m)
 =\sup_{\substack{1\le \ell\le m\\ y_1,\ldots,y_\ell\in X_+\setminus\{0\}}}
 \frac{(\sum_{i=1}^\ell\|y_i\|^p)^{1/p}}
 {\|(\sum_{i=1}^\ell y_i^p)^{1/p}\|}.
\]
The immediate bounds are
\[
 1\le\mathfrak C_{p,X}(m)\le m^{1/p}.
\]
A Banach lattice $X$ is \emph{$p$-concave} if there is $C_p(X)\ge 1$ such that
\[
 \biggl(\sum_{i=1}^m\|y_i\|^p\biggr)^{1/p}
 \le C_p(X)\biggl\|\biggl(\sum_{i=1}^m|y_i|^p\biggr)^{1/p}\biggr\|
\]
for every finite family $y_1,\dotsc,y_m\in X$.  
Hence $\sup_m\mathfrak C_{p,X}(m)\le C_p(X)$.  
An abstract $L^p$-space is both $p$-convex and $p$-concave with constant one, so $\mathfrak C_{p,X}(m)=1$; see \cite{M91}.

We shall use the following elementary numerical lemma to establish the preliminary power-growth estimates below.

\begin{lemma}\label{lem:reciprocal-sequence}
Let $(a_n)_{n\ge 1}$ be a positive sequence. Suppose that for some $B\ge 1$,
\begin{equation}\label{eq:reciprocal-growth}
a_N\sum_{r=1}^{\lfloor N/8\rfloor}\frac{1}{a_r}\leq BN
\end{equation}
for all sufficiently large $N$. Then there exist $C\ge 1$ and $\delta>0$ such that
\[
a_n\leq Cn^{1-\delta}
\]
for every $n\ge 1$.
\end{lemma}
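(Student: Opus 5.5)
The plan is to turn \eqref{eq:reciprocal-growth} into a lower bound for $1/a_N$, and from that derive a self-improving recursion for the partial sums
\[
S(M)=\sum_{r=1}^{M}\frac1{a_r}.
\]
That recursion forces polynomial growth $S(M)\gtrsim M^{\delta}$. Feeding this back into \eqref{eq:reciprocal-growth} then gives $a_N\le BN/S(\lfloor N/8\rfloor)\lesssim N^{1-\delta}$. Throughout, let $N_0\ge 8$ be such that \eqref{eq:reciprocal-growth} holds for all $N\ge N_0$.

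\emph{Step 1: the recursion.} Since $a_N>0$, \eqref{eq:reciprocal-growth} gives
\[
\frac1{a_r}\ge\frac{S(\lfloor r/8\rfloor)}{Br}\qquad (r\ge N_0).
\]
Fix $N\ge N_0$ and consider the block $8N<r\le 64N$. There $\lfloor r/8\rfloor\ge N$, and $S$ is nondecreasing, so
\[
S(64N)\ge S(8N)+\sum_{r=8N+1}^{64N}\frac{S(N)}{Br}\ge S(N)\Bigl(1+\frac{1}{B}\sum_{r=8N+1}^{64N}\frac1r\Bigr).
\]
The harmonic sum is at least $\log\frac{64N+1}{8N+1}$, which is bounded below by an absolute constant $c>0$ (for instance $c=1$ works, since $N\ge 1$). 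Hence
\[
S(64N)\ge q\,S(N),\qquad q:=1+\frac{c}{B}>1.
\]

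\emph{Step 2: polynomial lower bound for $S$.} Iterating from $N_0$ gives $S(64^kN_0)\ge q^kS(N_0)$, and $S(N_0)>0$. For a general $M\ge N_0$, choose $k$ with $64^kN_0\le M<64^{k+1}N_0$ and use monotonicity of $S$. This yields
\[
S(M)\ge c_1M^{\delta}\quad\text{for all } M\ge N_0,\qquad \delta=\frac{\log q}{\log 64}>0,
\]
with $c_1>0$ depending on $N_0$ and $S(N_0)$. Since $q\le 1+c<64$, we also have $\delta<1$, which is what the statement needs.

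\emph{Step 3: conclusion.} For $N\ge 8N_0$, \eqref{eq:reciprocal-growth} gives
\[
a_N\le\frac{BN}{S(\lfloor N/8\rfloor)}\le\frac{BN}{c_1(N/16)^{\delta}}\le C'N^{1-\delta}.
\]
The finitely many indices $n<8N_0$ are absorbed by enlarging $C$ to $\max\{C',\max_{n<8N_0}a_n,1\}$.

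The only genuinely delicate point is Step 1. It consists of choosing the block $(8N,64N]$ so that every index in it sees the full sum $S(N)$ on the left of \eqref{eq:reciprocal-growth}, while the block is still long enough that $\sum 1/r$ is bounded below uniformly. Everything else is routine bookkeeping of constants.
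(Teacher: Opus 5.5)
Your proof is correct, and it takes a somewhat different route from the paper.

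\emph{The paper's argument.} It groups indices into dyadic blocks and sets $b_k=\max_{2^k\le n<2^{k+1}}a_n/n$. It applies the hypothesis only at the index $N_k$ where this maximum is attained. This gives the recursion $c_k\ge\frac1{2B}\sum_{i=k_0}^{k-5}c_i$ for $c_k=b_k^{-1}$. Geometric growth of the partial sums of the $c_i$ then yields geometric decay of $b_k$; this requires iterating separately along the five residue classes mod $5$. The result is a bound on $a_n/n$ uniformly over each dyadic block, obtained in one pass.

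\emph{Your argument.} You apply the hypothesis at \emph{every} index of the block $(8N,64N]$ to get the lower bound $1/a_r\ge S(N)/(Br)$. From this you obtain $S(64N)\ge(1+1/B)S(N)$ for the partial sums $S$ of $1/a_r$ themselves. You then make a second pass through the hypothesis to turn $S(\lfloor N/8\rfloor)\gtrsim N^{\delta}$ into $a_N\lesssim N^{1-\delta}$. Your checks hold:
\begin{itemize}
\item the bound $\log\frac{64N+1}{8N+1}\ge\log\frac{65}{9}>1$;
\item $\lfloor N/8\rfloor\ge N/16$;
\item $\delta<1$, which you need to absorb the finitely many initial indices.
\end{itemize}

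\emph{Comparison.} Your version avoids choosing maximizers and the residue-class bookkeeping, so it is slightly shorter and more transparent. The paper's version controls $\sup a_n/n$ blockwise without a second application of the hypothesis. Both give exponents of the same order $\delta\asymp 1/B$ for large $B$: yours is $\log_{64}(1+1/B)$, the paper's is $\frac15\log_2\bigl(1+\frac1{2B}\bigr)$.
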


\begin{proof}
Choose $N_0\ge 1$ such that \eqref{eq:reciprocal-growth} holds for every $N\geq N_0$, and choose $k_0\ge 0$ such that $2^{k_0}\geq N_0$. For $k\geq k_0$, set $b_k=\max_{2^k\leq n<2^{k+1}}\frac{a_n}{n}$, $c_k=b_k^{-1}$, and choose $N_k\in[2^k,2^{k+1})$ such that $a_{N_k}/N_k=b_k$.

Fix $k\geq k_0+5$. If $k_0\leq i\leq k-5$ and $2^i\leq r<2^{i+1}$, then
\[
r<2^{i+1}\leq 2^{k-4}\leq \frac{N_k}{16}<\frac{N_k}{8}.
\]
Hence the whole dyadic block $[2^i,2^{i+1})$ occurs in the sum in \eqref{eq:reciprocal-growth} with $N=N_k$. Moreover, $a_r/r\leq b_i$, and hence 
\[
\sum_{2^i\leq r<2^{i+1}}\frac{1}{a_r}
\geq c_i\sum_{2^i\leq r<2^{i+1}}\frac{1}{r}
\geq \frac{c_i}{2}.
\]
Applying \eqref{eq:reciprocal-growth} with $N=N_k$, we obtain
\[
B\geq \frac{a_{N_k}}{N_k}\sum_{r=1}^{\lfloor N_k/8\rfloor}\frac{1}{a_r}
\geq \frac{b_k}{2}\sum_{i=k_0}^{k-5}c_i.
\]
Thus, for every $k\geq k_0+5$, we have 
\begin{equation}\label{eq:ck-recursion}
c_k\geq \frac{1}{2B}\sum_{i=k_0}^{k-5}c_i.
\end{equation}

Set $S_k=\sum_{i=k_0}^k c_i$ and $\lambda=\frac{1}{2B}$. By \eqref{eq:ck-recursion}, we have 
\[
S_k=S_{k-1}+c_k\geq (1+\lambda)S_{k-5}
\]
for every $k\geq k_0+5$. Put $\rho=(1+\lambda)^{1/5}>1$. Iterating the last inequality separately on the five residue classes modulo $5$, we obtain $S_k\geq C_0\rho^k$ for every $k\geq k_0$, where
$C_0=\min_{0\leq j\leq 4}S_{k_0+j}\rho^{-(k_0+j)}>0$. 
Consequently, \eqref{eq:ck-recursion} implies
\[
c_k\geq \lambda C_0\rho^{k-5}=C_1\rho^k
\]
for every $k\geq k_0+5$, where $C_1=\lambda C_0\rho^{-5}>0$. Hence
\[
b_k\leq C_2\rho^{-k},
\]
where $C_2=C_1^{-1}$.

Let $\delta=\log_2\rho=\frac{1}{5}\log_2\left(1+\frac{1}{2B}\right)>0$. Then $b_k\leq C_2 2^{-\delta k}$ for every $k\geq k_0+5$. If $2^k\leq n<2^{k+1}$, then
\[
a_n\leq nb_k\leq C_2n2^{-\delta k}\leq 2^\delta C_2n^{1-\delta}.
\]
Increasing the constant to cover the finitely many remaining values of $n$ completes the proof.
\end{proof}

\subsection{Mean-L-stability}
Let $T$ be an operator on a Banach space $X$.
We say that $T$ is \emph{mean-L-stable} if, for every $\varepsilon>0$, there exists $\delta>0$ such that
\[
 \|x-y\|<\delta
 \quad\Longrightarrow\quad
 \dens\{n\in\N:\|T^nx-T^ny\|\ge\varepsilon\}<\varepsilon,
\]
where, for $A\subset\N$,
\[
 \dens(A)=\limsup_{N\to\infty}\frac1N\card(A\cap\{1,\ldots,N\}).
\]
By linearity this is equivalent to the one-point formulation with $\| x\|$ and $\|T^nx\|$.
 
In analogy with the Banach–Steinhaus theorem, a Baire category argument gives the following uniform finite-time formulation, see e.g.\@ \cite[Corollary~5.16]{JL25}.

\begin{proposition}\label{prop:uniform-density}
An operator $T$ on a Banach space $X$ is mean-L-stable if and only if, for every $\varepsilon>0$, there exists $\delta>0$ such that
\[
 \sup_{N\in\N}\frac1N
 \card\{1\le j\le N:\|T^jx\|\ge\varepsilon\}<\varepsilon
\]
for every $x\in X$ with $\|x\|<\delta$.
\end{proposition}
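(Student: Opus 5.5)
The reverse implication is immediate, and the forward implication follows from a Baire category argument in the style of Banach--Steinhaus.

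\emph{Sufficiency.} Assume the uniform condition holds. Fix $\varepsilon>0$ and take the corresponding $\delta$. If $\|x-y\|<\delta$, then for $z=x-y$ every finite-time proportion $\frac1N\card\{1\le j\le N:\|T^jz\|\ge\varepsilon\}$ is at most some number $\eta<\varepsilon$. To keep the $\limsup$ strictly below $\varepsilon$, we apply the hypothesis with $\varepsilon/2$ in place of $\varepsilon$ and use the monotonicity of the set in $\varepsilon$. Passing to the $\limsup$ then gives $\dens\{n:\|T^nx-T^ny\|\ge\varepsilon\}\le\varepsilon/2<\varepsilon$, which is mean-L-stability.

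\emph{Necessity, step 1: closed sets.} Assume $T$ is mean-L-stable and fix $\varepsilon>0$. For $N\in\N$ and $M\in\N$ consider
\[
 F_{M}=\Bigl\{x\in X:\ \frac1N\card\{1\le j\le N:\|T^jx\|>\varepsilon/2\}\le\varepsilon/2\ \text{ for all } N\ge M\Bigr\}.
\]
For each fixed $N$, the map $x\mapsto\card\{j\le N:\|T^jx\|>\varepsilon/2\}$ is lower semicontinuous, since the sets $\{\|T^jx\|>\varepsilon/2\}$ are open. Hence $F_M$ is closed.

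\emph{Necessity, step 2: Baire.} By mean-L-stability with parameter $\varepsilon/2$ there is $\delta_0>0$ such that every $x$ with $\|x\|<\delta_0$ has upper density $<\varepsilon/2$. Such an $x$ therefore lies in some $F_M$. The closed ball $\overline{B}(0,\delta_0/2)$ is a complete metric space covered by the closed sets $F_M\cap\overline B(0,\delta_0/2)$. By Baire's theorem, some $F_M$ contains a ball $B(x_0,r)$.

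\emph{Necessity, step 3: recentering at $0$.} For $\|z\|<r$, write $z=(x_0+z)-x_0$. Since $\|T^jz\|>\varepsilon$ forces $\|T^j(x_0+z)\|>\varepsilon/2$ or $\|T^jx_0\|>\varepsilon/2$, we get
\[
 \frac1N\card\{j\le N:\|T^jz\|>\varepsilon\}\le\varepsilon \quad\text{for all } N\ge M.
\]
To get strict inequality and a ``$\ge$'' condition, we run the argument with slightly smaller constants, for instance $\varepsilon/3$.

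\emph{Necessity, step 4: small $N$.} The remaining obstacle is the finitely many $N<M$. Here we use continuity of $T,\dots,T^{M}$. Shrinking to $\delta\le r$ with $\delta\max_{j\le M}\|T^j\|<\varepsilon$ forces $\|T^jz\|<\varepsilon$ for all $j\le M$. Then the count is zero for $N<M$.

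Combining the two ranges, every $z$ with $\|z\|<\delta$ satisfies $\sup_N\frac1N\card\{j\le N:\|T^jz\|\ge\varepsilon\}<\varepsilon$, as required.

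The main point requiring care is steps 2 and 3: obtaining a uniform tail index $M$ via Baire and then transferring it from a ball around $x_0$ to a ball around $0$ by linearity. The bookkeeping of $\varepsilon/2$ versus $\varepsilon$, and of strict versus non-strict inequalities, is routine.
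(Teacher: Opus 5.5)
Your proposal is correct and is essentially the argument the paper has in mind: the paper gives no proof of its own, only invoking a Baire category argument in the spirit of Banach--Steinhaus via \cite[Corollary~5.16]{JL25}, and your four steps are exactly such an argument (the closed sets $F_M$, Baire, recentering at $0$ by linearity, and continuity of $T,\dots,T^M$ for the finitely many small $N$). Two small points you could tighten: Baire in $\overline B(0,\delta_0/2)$ only gives relative interior, which still contains an open ball of $X$ (alternatively, apply Baire directly in the open ball $B(0,\delta_0)$); and in the sufficiency direction the strict inequality is automatic, since $\limsup\le\sup<\varepsilon$, so the switch to $\varepsilon/2$ is not needed.
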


We shall also use the following consequence of \cite[Theorem~5.13, Proposition~5.33 and Theorem~5.41]{JL25}.

\begin{proposition}\label{prop:MLS-dichotomy}
Let $T$ be an operator on a Banach space $X$.
Then either $T$ is mean-L-stable or there exist $x\in X$ and $A\subset\N$ with $\dens(A)=1$ such that
\[
 \lim_{A\ni n\to\infty}\|T^nx\|=\infty.
\]
\end{proposition}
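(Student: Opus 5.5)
This dichotomy is quoted as a consequence of results in \cite{JL25}, but I would also give a direct argument. The idea is to assume $T$ is not mean-L-stable and build a single vector whose orbit tends to infinity along a set of upper density one. The argument has three steps.

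First, I would convert the failure of mean-L-stability into a quantitative statement. By Proposition~\ref{prop:uniform-density}, failure yields some $\varepsilon_0>0$ such that for every $\delta>0$ there are $x$ with $\|x\|<\delta$ and $N\in\N$ with
\[
\tfrac1N\card\{1\le j\le N:\|T^jx\|\ge\varepsilon_0\}\ge\varepsilon_0 .
\]
Rescaling by linearity, for every $R>0$ there are a unit vector $x_R$ and a time $N_R$ such that at least a proportion $\varepsilon_0$ of the times $j\le N_R$ satisfy $\|T^jx_R\|\ge R$. So $\Phi_T(R)\ge\varepsilon_0$ for all $R$.

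Second, and this is the main obstacle, I would boost the proportion $\varepsilon_0$ up to proportions close to $1$. I would try the following. The set $G=\{j\le N:\|T^jx\|\ge R\}$ has proportion at least $\varepsilon_0$. I would consider $y=\sum_i \theta_i T^{s_i}x$ for a few shifts $s_i$ and random signs or unimodular phases $\theta_i$. The union of the shifted sets $G-s_i$ then covers most of $[1,N']$, because shifting a set of positive density covers almost everything after $O(\varepsilon_0^{-1}\log(1/\eta))$ random shifts. Averaging over the phases gives, for each covered time, a lower bound on $\|T^jy\|$ comparable to the maximal summand. Here I would use the elementary fact that $\max_\theta\|a+\theta b\|\ge\|b\|$, or Kahane contraction, applied pointwise in $j$. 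The Markov and Fubini-over-randomness bookkeeping is what I expect to be delicate: a single choice of phases must work for most $j$ simultaneously. I would handle this by choosing phases that maximize the expected count. The outcome is: for each $\eta>0$ and $R>0$ there exist $y$ with $\|y\|\le C(\eta)$ and $N$ with
\[
\card\{j\le N:\|T^jy\|\ge R\}\ge(1-\eta)N .
\]
Since $R$ is arbitrary, rescaling removes the dependence of $\|y\|$ on $\eta$.

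Third, I would glue these finite blocks into one vector with a Baire category argument. For $k\in\N$, let $U_k$ be the set of $x$ for which there is some $N\ge k$ with $\card\{j\le N:\|T^jx\|> k\}>(1-1/k)N$. Each $U_k$ is open, since only finitely many times are involved. I would show $U_k$ is dense: given $z$ and $\rho>0$, set $z+\rho' y$ with $y$ from the second step, taking $R$ large enough that $\|T^j(\rho'y)\|$ dominates $\|T^jz\|\le\|T\|^j\|z\|$ on $j\le N$. Then $\bigcap_k U_k$ is a dense $G_\delta$. Any $x$ in it has, along times $N_k\to\infty$, the sets $\{j\le N_k:\|T^jx\|>k\}$ of proportion tending to $1$. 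A diagonal choice of the $N_k$, growing fast enough that earlier blocks are negligible, yields a set $A=\bigcup_k\{j\in(N_{k-1},N_k]:\|T^jx\|>k\}$ with $\dens(A)=1$ and $\|T^nx\|\to\infty$ along $A$.

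The two alternatives are exclusive, since the vector $x$ would violate uniform boundedness in density, so the statement is a genuine dichotomy.
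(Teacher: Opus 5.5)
The paper does not prove this proposition; it imports it from \cite{JL25}. Your Step~1 is fine, since it uses only the trivial direction of Proposition~\ref{prop:uniform-density}. The diagonal extraction of $A$ at the end of Step~3 is also fine. The gap is Step~2, which carries the entire content of the dichotomy, and it does not work as sketched.

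First, the pointwise fact you invoke, $\max_\theta\|a+\theta b\|\ge\|b\|$, only says that each covered time $j$ is good with probability at least $1/2$ over the sign of one summand. Choosing the phases to maximize the expected count therefore guarantees only about half of the covered times. Iterating the construction cannot raise this guarantee above $1/2$. Reaching $1-\eta$ needs a genuine small-ball estimate, such as $\mathbb P(\|a+\theta b\|<c\|b\|)\le Cc$ for $\theta$ uniform on the circle (or a real parameter $t\in[1,2]$ in real spaces), at the cost of a factor $c$ in the threshold.

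Second, and more seriously, nothing bounds $\|y\|\le\sum_i\|T^{s_i}x\|$ independently of $R$: off $G$ you only know $\|T^sx\|<R$.
\begin{itemize}
\item Bounded shifts $s_i\le S$, for which $\|T^{s_i}x\|\le\|T\|^S$, cannot spread $G$. If $G$ is the final segment $[(1-\varepsilon_0)N,N]$, then $\bigcup_i(G-s_i)$ still has proportion about $\varepsilon_0$ in every window $[1,N']$.
\item Shifts of order $N$ are what your covering claim requires. For those, $\|T^{s_i}x\|$ may be comparable to $R$, so $\|T^jy\|/\|y\|$ need not be large, and the final rescaling ``removing the dependence on $\eta$'' is unjustified.
\end{itemize}

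The natural repair is to argue by contradiction from $\Phi_T(R_0)\le 1-2\eta$. Then every unit orbit has roughly $2\eta M$ times $s<M$ with $\|T^sx\|\le R_0$, for every $M$, and you shift only by such times. Even so, one step is not enough. Suppose the orbit is below $R_0$ on the first $2\eta P$ steps and above $R$ on the last $\varepsilon_0P$ steps of each period of length $P$. Then translates of $G$ by low times cover only a proportion about $\varepsilon_0+2\eta$ of any long window. What is really needed is the zero--one law $\lim_{R\to\infty}\Phi_T(R)\in\{0,1\}$, which requires an iterative or multiscale argument you have not supplied. You cannot obtain it from Propositions~\ref{prop:density-characterization} or~\ref{prop:uniform-tail}, because their implications (iii)$\Rightarrow$(i) are themselves derived from this dichotomy.

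Step~3 as written is also circular. The horizon $N$ is produced together with $y$ after $R$ is fixed, so $R$ cannot be chosen to dominate $\|T\|^N\|z\|$. This part is repairable with the same small-ball estimate: choose $\theta$ so that $\|T^j(z+\theta\rho'y)\|\ge c\rho'\|T^jy\|$ for all but a proportion $O(c)$ of the good times. Step~2, however, remains a genuine gap.
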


By homogeneity, mean-L-stability is equivalent to the following statement: for every $0<\eta<1$ there is $K>0$ such that
\[
 \dens\{n\in\N:\|T^nx\|>K\|x\|\}<\eta,\quad 
 \forall x\ne 0.
\]
One direction follows by scaling a unit vector into the $\delta$-ball.  
For the converse, take a density level smaller than the prescribed tolerance and scale back.  
We will prove that an estimate of the following form is equivalent to mean-L-stability.

For $0<\eta<1$, put
\begin{align*}
\mathcal K_T(\eta)=\inf \biggl\{K\ge 1\colon
&\exists N_0\in\N\text{ s.t. }\forall x\ne 0,\ \forall N\ge N_0, \\
&\frac1N\card\bigl\{1\le j\le N:\|T^jx\|>K\|x\|\bigr\}\le\eta\biggr\}.
\end{align*}
The value $+\infty$ is allowed.
If $T$ is absolutely Ces\`aro bounded with constant $C$, then Markov's inequality, applied to a Ces\`aro average of length $N+1$, shows that
\[
 \mathcal K_T(\eta)\le\max\left\{1,\frac{2C}{\eta}\right\}.
\]

\begin{proposition}\label{prop:density-characterization}
Let $T$ be an operator on a Banach space $X$.
Then the following assertions are equivalent:
\begin{enumerate}[label=\textup{(\roman*)}]
    \item $T$ is mean-L-stable;
    \item $\mathcal K_T(\eta)<\infty$ for every $0<\eta<1$;
    \item $\mathcal K_T(\eta_0)<\infty$ for some $0<\eta_0<1$.
\end{enumerate}
\end{proposition}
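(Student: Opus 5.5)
We will prove (i)$\Rightarrow$(ii)$\Rightarrow$(iii)$\Rightarrow$(i). The middle implication is trivial, so the work lies in the two outer ones.

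\emph{(i)$\Rightarrow$(ii).} Fix $0<\eta<1$ and set $\varepsilon=\min\{\eta,1\}/2$. Proposition~\ref{prop:uniform-density} provides $\delta>0$ such that
\[
\sup_N\frac1N\card\{1\le j\le N:\|T^jy\|\ge\varepsilon\}<\varepsilon
\]
whenever $\|y\|<\delta$. Given $x\ne0$, we apply this to $y=\frac{\delta}{2\|x\|}x$. Then $\|T^jx\|>\frac{2\varepsilon}{\delta}\|x\|$ forces $\|T^jy\|>\varepsilon$. Hence, with $K=\max\{1,2\varepsilon/\delta\}$, the density count is at most $\varepsilon\le\eta$ for every $N\ge1$. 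We may take $N_0=1$, and so $\mathcal K_T(\eta)\le K<\infty$.

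\emph{(iii)$\Rightarrow$(i).} This is the substantive direction, and we argue by contraposition using Proposition~\ref{prop:MLS-dichotomy}. Suppose $T$ is not mean-L-stable. Then there are $x\in X$ and $A\subset\N$ with $\dens(A)=1$ such that $\|T^nx\|\to\infty$ along $A$. In particular $x\ne0$. Since $\mathcal K_T(\eta_0)<\infty$, we may pick a finite $K\ge1$ and $N_0$ satisfying the defining condition. Along $A$ there is $n_1$ such that $\|T^nx\|>K\|x\|$ for all $n\in A$ with $n\ge n_1$.

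Because $\dens(A)=1$ is a limsup statement, there are arbitrarily large $N$ with
\[
\frac1N\card(A\cap[1,N])>\frac{1+\eta_0}{2}.
\]
For such $N$, which we may also take large relative to $n_1$ and $N_0$, the proportion of $j\le N$ with $\|T^jx\|>K\|x\|$ is at least $\frac{1+\eta_0}{2}-\frac{n_1}{N}>\eta_0$. This contradicts the choice of $K$, and hence $T$ is mean-L-stable.

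The main obstacle is that (iii) controls only a single density level $\eta_0$ and gives no uniformity as $\varepsilon\to0$. A direct route from (iii) to (i) would require amplifying a single level into all levels, and that is not available by elementary means. The dichotomy of Proposition~\ref{prop:MLS-dichotomy} resolves this, because failure of mean-L-stability produces an orbit that escapes to infinity on a set of full upper density, which violates every density level at once. The remaining care is purely bookkeeping: the limsup in $\dens(A)=1$ provides infinitely many admissible $N$, and we choose them beyond both $N_0$ and the threshold $n_1$.
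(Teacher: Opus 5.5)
Your proof is correct and follows the paper's argument essentially verbatim. For (i)$\Rightarrow$(ii) you rescale via Proposition~\ref{prop:uniform-density} at level $\eta/2$, and your $K=\max\{1,2\varepsilon/\delta\}$ is exactly the paper's $\max\{1,\eta/\delta\}$; for (iii)$\Rightarrow$(i) you derive a contradiction from the dichotomy of Proposition~\ref{prop:MLS-dichotomy}, where you only spell out the upper-density bookkeeping that the paper compresses into one line.
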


\begin{proof}
Assume first that $T$ is mean-L-stable, and fix $0<\eta<1$. 
By Proposition~\ref{prop:uniform-density}, applied with $\eta/2$, there exists $\delta>0$ such that
\[
 \sup_{N\in\N}\frac1N
 \card\{1\le j\le N:\|T^jz\|\ge\eta/2\}<\eta/2
\]
whenever $\|z\|<\delta$. Put $K=\max\left\{1,\frac{\eta}{\delta}\right\}$. For $x\ne 0$, set $z=\delta x/(2\|x\|)$. Then $\|z\|<\delta$, and
\[
 \|T^jx\|>K\|x\|
 \quad\Longrightarrow\quad
 \|T^jz\|>\frac{\delta K}{2}\ge\frac{\eta}{2}.
\]
It follows that, for every $N\in\N$,
\[
 \frac1N\card\bigl\{1\le j\le N:\|T^jx\|>K\|x\|\bigr\}
 <\frac{\eta}{2}<\eta.
\]
Thus $\mathcal K_T(\eta)\le K<\infty$. Since $\eta$ was arbitrary, $\mathcal K_T(\eta)<\infty$ for every $0<\eta<1$.
Thus \textup{(i)} implies \textup{(ii)}. The implication \textup{(ii)}$\Rightarrow$\textup{(iii)} is immediate.

It remains to prove \textup{(iii)}$\Rightarrow$\textup{(i)}. Suppose that $\mathcal K_T(\eta_0)<\infty$ for some $0<\eta_0<1$. Choose $K\ge 1$ and $N_0\ge 1$ such that for every $x\neq 0$ and every $N\ge N_0$, 
\[
 \frac1N\card\bigl\{1\le j\le N:\|T^jx\|>K\|x\|\bigr\}\le\eta_0.
\]
Suppose that $T$ is not mean-L-stable. By Proposition~\ref{prop:MLS-dichotomy}, there exist $x\in X$ and $A\subset\N$ with $\dens(A)=1$ such that
\[
 \lim_{A\ni n\to\infty}\|T^nx\|=\infty.
\]
Clearly $x\ne 0$. Hence there exists $n_0\ge 1$ such that
\[
 n\in A,\ n\ge n_0
 \quad\Longrightarrow\quad
 \|T^nx\|>K\|x\|.
\]
Removing finitely many integers does not change upper density, and therefore
\[
 \dens\{n\in\N:\|T^nx\|>K\|x\|\}=1.
\]
On the other hand, the choice of $K$ and $N_0$ gives
\[
 \dens\{n\in\N:\|T^nx\|>K\|x\|\}\le\eta_0<1,
\]
a contradiction. Thus $T$ is mean-L-stable.
\end{proof}

We use the term \emph{uniform boundedness in density} for the following uniform estimate. 
The next proposition shows that it is equivalent to mean-$L$-stability.
For $R>0$, define the finite-time tail function for orbits of the unit ball by
\[
 \Phi_T(R)=\sup_{\|x\|\le 1}\sup_{N\ge 1}\frac1N
 \card\{1\leq j\leq N:\|T^jx\|>R\}.
\] 

\begin{proposition}\label{prop:uniform-tail}
Let $T$ be an operator on a Banach space $X$.
Then the following assertions are equivalent:
\begin{enumerate}[label=\textup{(\roman*)}]
\item $T$ is mean-L-stable;
\item $\Phi_T(R)\to0$ as $R\to\infty$;
\item there exists $R>0$ such that $\Phi_T(R)<1$.
\end{enumerate}
\end{proposition}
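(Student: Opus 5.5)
We prove (i)$\Rightarrow$(ii)$\Rightarrow$(iii)$\Rightarrow$(i), relying on Proposition~\ref{prop:uniform-density} and Proposition~\ref{prop:density-characterization}.

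For (i)$\Rightarrow$(ii), fix $\varepsilon\in(0,1)$. Proposition~\ref{prop:uniform-density} gives $\delta>0$ such that
\[
 \sup_{N}\frac1N\card\{1\le j\le N:\|T^jz\|\ge\varepsilon\}<\varepsilon
\]
whenever $\|z\|<\delta$. Take $\|x\|\le1$ and $R>2\varepsilon/\delta$, and put $z=\delta x/2$, so that $\|z\|<\delta$. By homogeneity,
\[
 \|T^jx\|>R\ \Longrightarrow\ \|T^jz\|>\delta R/2>\varepsilon .
\]
Hence every finite-time density in the definition of $\Phi_T(R)$ is below $\varepsilon$, uniformly in $x$ and $N$. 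This gives $\Phi_T(R)\le\varepsilon$ for all $R>2\varepsilon/\delta$. Since $\Phi_T$ is nonincreasing in $R$, we conclude $\Phi_T(R)\to0$.

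The implication (ii)$\Rightarrow$(iii) is immediate.

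For (iii)$\Rightarrow$(i), suppose $\Phi_T(R)=\eta_0<1$ for some $R>0$, and put $K=\max\{1,R\}$. For $x\neq0$, apply the definition of $\Phi_T$ to $x/\|x\|$. This gives
\[
 \frac1N\card\{1\le j\le N:\|T^jx\|>K\|x\|\}\le \frac1N\card\{1\le j\le N:\|T^j(x/\|x\|)\|>R\}\le\eta_0
\]
for every $N\ge1$. So $\mathcal K_T(\eta_0)\le K<\infty$ with $N_0=1$. Proposition~\ref{prop:density-characterization}\textup{(iii)}$\Rightarrow$\textup{(i)} then shows that $T$ is mean-L-stable.

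No step is a real obstacle. The nontrivial ingredients, namely the Baire uniformity and the dichotomy result, are already contained in Propositions~\ref{prop:uniform-density} and~\ref{prop:density-characterization}. The only points needing care are the strict versus non-strict inequalities and the scaling constant chosen in (i)$\Rightarrow$(ii).
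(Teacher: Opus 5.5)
Your proof is correct and follows the paper's: (i)$\Rightarrow$(ii) is the same scaling argument from Proposition~\ref{prop:uniform-density}, and for (iii)$\Rightarrow$(i) you pass through Proposition~\ref{prop:density-characterization}, whose proof is exactly the appeal to Proposition~\ref{prop:MLS-dichotomy} that the paper makes directly. One small fix: if $\Phi_T(R)=0$, replace $\eta_0$ by any number in $(0,1)$, since $\mathcal K_T(\eta)$ is only defined for $0<\eta<1$.
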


\begin{proof}
Assume first that $T$ is mean-L-stable, and fix $0<\eta<1$. By Proposition~\ref{prop:uniform-density}, applied with $\eta/2$, there exists $\delta>0$ such that, for every $z\in X$ with $\|z\|<\delta$,
\[
 \sup_{N\in\N}\frac1N
 \card\{1\le j\le N:\|T^jz\|\ge\eta/2\}<\eta/2.
\]
Put $R=\eta/\delta$. If $\|x\|\le1$ and $z=\delta x/2$, then $\|z\|<\delta$ and
\[
 \|T^jx\|>R
 \quad\Longrightarrow\quad
 \|T^jz\|>\eta/2.
\]
Hence
\[
 \sup_{N\in\N}\frac1N
 \card\{1\le j\le N:\|T^jx\|>R\}<\eta/2.
\]
Taking the supremum over $\|x\|\le1$ gives $\Phi_T(R)\le\eta/2$. Since $\Phi_T$ is decreasing and $\eta>0$ is arbitrary, it follows that
\[
 \Phi_T(R)\longrightarrow0
 \quad\text{as }R\to\infty.
\]
Thus \textup{(i)} implies \textup{(ii)}. The implication \textup{(ii)}$\Rightarrow$\textup{(iii)} is immediate.

It remains to prove \textup{(iii)}$\Rightarrow$\textup{(i)}. 
Suppose that $T$ is not mean-L-stable. By Proposition~\ref{prop:MLS-dichotomy}, there exist $x\in X$ and $A\subset\N$ with $\dens(A)=1$ such that
\[
 \lim_{A\ni n\to\infty}\|T^nx\|=\infty.
\]
Clearly $x\ne 0$. Put $u=x/\|x\|$. Fix $R>0$. There exists $n_0\ge 1$ such that
\[
 n\in A,\ n\ge n_0
 \quad\Longrightarrow\quad
 \|T^nu\|>R.
\]
Removing finitely many integers does not change upper density, and hence
\[
 \dens\{n\in\N:\|T^nu\|>R\}=1.
\]
Therefore
\[
 \sup_{N\ge 1}\frac1N
 \card\{1\le j\le N:\|T^ju\|>R\}=1.
\]
Since $\|u\|=1$, we obtain $\Phi_T(R)=1$. As $R>0$ was arbitrary, $\Phi_T(R)=1$ for every $R>0$, contrary to \textup{(iii)}. Thus $T$ is mean-L-stable.
\end{proof}

\section{Relations between mean-L-stability and absolute Ces\`aro boundedness}\label{sec:relations}

In this section, we investigate the relationships among  various versions of boundedness and we prove Theorem~\ref{thm:relationships}.
We start with positive operators on an AM-space.

\begin{proposition}\label{prop:AM-power-bounded}
Let $T$ be a positive operator on an AM-space $X$.
If $\Phi_T(K)<2^{-1}$ for some $K\ge 1$, then $ \|T^n\|\leq K^2$ for all $n\ge 1$. 
In particular, every positive mean-L-stable operator on an AM-space is power bounded.
\end{proposition}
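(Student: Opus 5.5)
We argue by contradiction on a single positive vector. By positivity we may restrict attention to positive vectors. Indeed, $|T^nx|\le T^n|x|$ and $\||x|\|=\|x\|$, so $\|T^n\|=\sup\{\|T^nx\|:x\in X_+,\ \|x\|\le1\}$. Suppose that $x\in X_+$, $\|x\|\le 1$ and $\|T^nx\|>K^2$ for some $n\ge1$. The plan is to manufacture a single vector $y$ in the unit ball whose orbit exceeds $K$ at more than half of the times $1,\dots,n$. This would give $\Phi_T(K)\ge$ (fraction $>1/2$), contradicting the hypothesis.

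\textbf{Construction of $y$.} Put
\[
J=\{0\le j\le n-1:\ \|T^jx\|\le K\},\qquad y=\bigvee_{j\in J}\frac{T^jx}{K}\ \vee\ x .
\]
Note that $0\in J$ since $K\ge1$. Each term is positive with norm at most $1$, so the AM-property gives $\|y\|\le1$. This is the only place the AM-structure enters, and it is exactly what permits taking a finite supremum of many unit vectors without increasing the norm.

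\textbf{Counting bad times.} Positivity of $T$ gives two lower bounds. First, $T^iy\ge T^ix$, so every $i\in B:=\{1\le i\le n-1:\|T^ix\|>K\}$ satisfies $\|T^iy\|>K$; here we use that in a Banach lattice $0\le u\le v$ implies $\|u\|\le\|v\|$. Second, for $j\in J$ we have $T^{n-j}y\ge T^nx/K$, and therefore $\|T^{n-j}y\|>K^2/K=K$. The times $n-j$ with $j\in J$ include $n$ itself (from $j=0$) and all $n-j$ with $j\in J':=\{1,\dots,n-1\}\setminus B$. Since $|B|+|J'|=n-1$, the set $B\cup(n-J')$ inside $\{1,\dots,n-1\}$ has at least $(n-1)/2$ elements. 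Adding the time $n$, we get at least $(n+1)/2>n/2$ times $1\le i\le n$ with $\|T^iy\|>K$. Hence $\Phi_T(K)>1/2$, which is a contradiction. Therefore $\|T^n\|\le K^2$ for every $n$.

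\textbf{The ``in particular'' part.} If $T$ is mean-L-stable, Proposition~\ref{prop:uniform-tail} gives $\Phi_T(R)\to0$ as $R\to\infty$. So some $K\ge1$ has $\Phi_T(K)<1/2$, and power boundedness follows from the first part.

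The main obstacle is the counting step. The two families of bad times ($B$ and $n-J'$) may overlap, so one has to check that a simple pigeonhole argument on their combined sizes, together with the extra time $n$, still beats the threshold $1/2$ strictly. The bookkeeping above is designed to achieve exactly that.
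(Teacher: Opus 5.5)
Your proof is correct and follows the paper's argument: you restrict to positive unit vectors, take $y$ as the lattice supremum of $K^{-1}T^kx$ over the good times $k<n$ (its norm is at most $1$ by the AM-property), and use $T^{n-k}y\ge K^{-1}T^nx$. The only difference is that you also adjoin $x$ to the supremum, so the bad times of $x$ become bad times of $y$; a pigeonhole count then replaces the paper's preliminary application of $\Phi_T(K)<1/2$ to $x$ itself (with $N=n-1$), which it uses to obtain $\card(G)>n/2$.
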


\begin{proof}
Fix $n\ge 1$ and $x\in B_X\cap X_+$. Put
\[
 G=\{0\leq k<n:\|T^kx\|\leq K\} \text{ and }
 J=\{n-k:k\in G\}.
\]
If $n=1$, then $G=\{0\}$ and hence $\card(J)=1>n/2$. If $n\geq2$, the definition of
$\Phi_T(K)$, applied with $N=n-1$, gives
\[
 \card(J)=\card(G)>\frac n2.
\]

For $j=n-k\in J$, set $ u_j=K^{-1}T^kx$. Then $u_j\in B_X\cap X_+$ and $T^ju_j=K^{-1}T^nx$. Let $ y=\bigvee_{j\in J}u_j$. Since $X$ is an AM-space,
\[
 \|y\|=\max_{j\in J}\|u_j\|\le 1.
\]
Moreover, positivity of $T$ implies for each $j\in J$, 
\[
 T^jy\geq T^ju_j=K^{-1}T^nx.
\]

Suppose that $\|T^nx\|>K^2$. Then, for every $j\in J$,
\[
 \|T^jy\|\geq K^{-1}\|T^nx\|>K.
\]
Consequently,
\[
 \frac1n\card\{1\leq j\leq n:\|T^jy\|>K\}
 \geq\frac{\card(J)}n>\frac12,
\]
which contradicts $\Phi_T(K)<1/2$, since $\|y\|\le 1$. Thus $\|T^nx\|\leq K^2$ for every $x\in B_X\cap X_+$.

Finally, let $z\in B_X$ be arbitrary. Since $T$ is positive,
\[
 |T^nz|\leq T^n|z|.
\]
As $|z|\in B_X\cap X_+$, it follows that
\[
 \|T^nz\|\leq\|T^n|z|\|\leq K^2.
\]
Hence $\|T^n\|\leq K^2$ for all $n\ge 1$, and therefore $T$ is power bounded.
\end{proof}

Cohen, Cuny, Eisner and Lin proved in \cite[Theorem 4.4]{CCEL20} that every absolutely Ces\`aro
bounded operator on a Hilbert space is Ces\`aro
square bounded. 
We can strengthen this result with the following quantitative estimate.

\begin{proposition}\label{prop:Hilbert-square}
Let $H$ be a Hilbert space and let $T\colon H\to H$ be an operator. 
Suppose that $\Phi_T(R)<2^{-10}$ for some $R\ge 1$. Then, for every $x\in H$ and every $M\ge 1$, 
\begin{equation}\label{eq:Hilbert-block-square}
\frac1M\sum_{k=M}^{2M-1}\|T^kx\|^2\le 128R^4\|x\|^2.
\end{equation}
Consequently,
\begin{equation}\label{eq:Hilbert-square-global}
\sup_{N\ge 1}\frac1N\sum_{k=0}^{N-1}\|T^kx\|^2\le 257R^4\|x\|^2.
\end{equation}
\end{proposition}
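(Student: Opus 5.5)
The plan is to combine the "good times" idea from Proposition~\ref{prop:AM-power-bounded} with randomization by Rademacher signs. In a Hilbert space the signs make cross terms vanish, so they play the role that the lattice supremum plays in an AM-space.

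\emph{Setup.} Fix $x$ with $\|x\|=1$ and $M\ge1$. Let
\[
G=\{0\le k<M:\|T^kx\|\le R\}.
\]
Because $\Phi_T(R)<2^{-10}$ (applied with $N=M-1$, as in Proposition~\ref{prop:AM-power-bounded}), we have $\card(G)\ge(1-2^{-10})M$. For signs $\varepsilon=(\varepsilon_k)$ put
\[
z_\varepsilon=R^{-1}M^{-1/2}\sum_{k\in G}\varepsilon_kT^kx .
\]
By orthogonality in $L^2(\varepsilon;H)$ we get $\mathbb E\|z_\varepsilon\|^2\le1$. By Chebyshev, $\|z_\varepsilon\|\le 4$ with probability at least $15/16$. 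For every $j\ge1$,
\[
\mathbb E\|T^jz_\varepsilon\|^2=R^{-2}M^{-1}a_j^2,\qquad a_j^2:=\sum_{k\in G}\|T^{j+k}x\|^2 .
\]
Since each $n\in[M,2M)$ equals $j+k$ with $k\in G$ and $j=n-k\in[1,2M]$ for $\card(G)$ choices of $k$, we have
\[
\sum_{j=1}^{2M}a_j^2\ \ge\ \card(G)\sum_{n=M}^{2M-1}\|T^nx\|^2 .
\]

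\emph{Transfer to a single vector.} To turn second moments into density information I will use Paley--Zygmund together with the Hilbert-space Khintchine--Kahane bound
\[
\mathbb E\|\textstyle\sum\varepsilon_kv_k\|^4\le 3\bigl(\mathbb E\|\textstyle\sum\varepsilon_kv_k\|^2\bigr)^2 .
\]
These give, for each fixed $j$,
\[
\|T^jz_\varepsilon\|\ge\tfrac12(\mathbb E\|T^jz_\varepsilon\|^2)^{1/2}\quad\text{with probability at least }3/16 .
\]
Let $J$ be the set of $j\in[1,2M]$ with $a_j^2\ge 64R^4M$; for $j\in J$ we get $\|T^jz_\varepsilon\|\ge 4R$ with probability $\ge3/16$. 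Averaging over $\varepsilon$ and intersecting with the event $\|z_\varepsilon\|\le4$ produces one sign vector $\varepsilon$ such that $w=z_\varepsilon/4\in B_H$ and
\[
\card\{1\le j\le 2M:\|T^jw\|>R\}\ \ge\ \tfrac{3}{16}\card(J)-\tfrac1{16}\cdot 2M .
\]
Comparing with $\Phi_T(R)<2^{-10}$ at $N=2M$ shows $\card(J)\le cM$ for a small absolute constant $c$.

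\emph{Main obstacle.} The previous step bounds the number of large $a_j$, not their sum, so it does not yet bound $\sum_j a_j^2$. I would close this gap by a self-improving (bootstrap) argument. Define
\[
Q_M=\sup_{\|x\|\le1}\frac1M\sum_{n=M}^{2M-1}\|T^nx\|^2 .
\]
This is finite for each $M$, since $T$ is bounded. The contribution of the few $j\in J$ can be bounded crudely by splitting the windows $[j,j+M)$ into at most two dyadic-type blocks. This gives $a_j^2\lesssim M\max\{Q_{M'}:M'\le 2M\}$. For $j\notin J$ we use the cutoff $a_j^2<64R^4M$. Dividing the sum by $\card(G)\,M$ should yield an inequality of the shape
\[
Q_M\le 64\cdot(1+o(1))R^4+c'\,\max_{M'\le 2M}Q_{M'},\qquad c'<1/2 .
\]
Here the density threshold $2^{-10}$ is what makes $c'$ small. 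Iterating this, or taking the supremum over $M\le M_0$ and then letting $M_0\to\infty$, should give \eqref{eq:Hilbert-block-square} with constant $128R^4$. The delicate part, where I expect the constants to need care, is keeping the bootstrap from referring to blocks longer than those already controlled.

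\emph{Global bound.} Once \eqref{eq:Hilbert-block-square} holds, \eqref{eq:Hilbert-square-global} follows by dyadic decomposition. Split $[0,N)$ into the term $k=0$ and the blocks $[2^i,2^{i+1})$, the last one truncated and dominated by a full block. This gives
\[
\sum_{k<N}\|T^kx\|^2\le\|x\|^2+128R^4\|x\|^2\sum_{2^i<N}2^i\le(1+256R^4)N\|x\|^2 ,
\]
and since $R\ge1$ this is at most $257R^4N\|x\|^2$.
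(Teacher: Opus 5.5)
\noindent Your randomization, the fourth-moment and Paley--Zygmund step, and the final dyadic summation are fine. The count of $J$ can also be made to work, with one fix: when you intersect with $\{\|z_\varepsilon\|\le4\}$, use $Z\le\card(J)$ rather than $Z\le 2M$. This gives on average at least $\frac18\card(J)$ exceedances of $R$, hence $\card(J)<M/64$. Your stated bound $\frac3{16}\card(J)-\frac18M$ only yields $\card(J)\lesssim\frac23M$, which is not small.

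The genuine gap is the step you flagged yourself: turning ``few $j$ have $a_j^2\ge64R^4M$'' into a bound on $L=\frac1M\sum_{n=M}^{2M-1}\|T^nx\|^2$. The bootstrap does not close as written.
\begin{itemize}
\item The windows $[j,j+M)$ with $j\le 2M$ reach past $2M$, so the right-hand side involves $Q_{M'}$ with $M'>M$. Taking the supremum over $M\le M_0$ therefore never closes.
\item Iterating $S(M)\le A+c'S(2M)$, with $S(M)=\max_{M'\le M}Q_{M'}$, only works given an a priori bound such as $Q_M=O(M)$ together with $c'<1/2$. Finiteness of each $Q_M$ gives only $Q_M\le\|T\|^{4M}$, which no $c'<1$ absorbs. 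You would have to import $\|T^n\|=O(\sqrt n)$ from Theorem~\ref{thm:uniform-common}(i), and you do not.
\item Windows with $j<M/3$ cannot be covered by two blocks $[M',2M')$.
\item The $j\notin J$ terms contribute up to $(2M-1)\cdot64R^4M$. After dividing by $\card(G)M$ your leading term is about $128R^4/(1-2^{-10})$, not $64R^4$. So even a repaired bootstrap would not return the stated constant $128R^4$.
\end{itemize}

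\noindent The missing idea, which is exactly how the paper argues, is to truncate to the target block. Put $b_j^2=\sum_{k\in G,\ M\le j+k<2M}\|T^{j+k}x\|^2\le a_j^2$. Your double counting gives $\sum_{j=1}^{2M-1}b_j^2=\card(G)\,ML$ exactly. Since $k\mapsto j+k$ is injective, each $b_j^2\le ML$, so every summand is controlled by the very quantity $L$ being estimated rather than by longer blocks. A reverse-Markov pigeonhole then forces more than $\card(G)/2$ indices $j$ to satisfy $b_j^2>\frac14\card(G)L$. Argue by contradiction: if $L$ exceeds a suitable multiple of $R^4$, all these $j$ have $\mathbb E\|T^jz_\varepsilon\|^2$ above your Paley--Zygmund threshold, so they lie in $J$, contradicting $\card(J)<M/64$. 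The paper does this with normalization $(R\sqrt{\card(G)})^{-1}$, Paley--Zygmund at level $1/2$, and the set $\{n:q_n>L/(4R^2)\}$ of size $>M/4$; those choices yield exactly $128R^4$. The bound then holds at each fixed $M$ directly, with no induction over block lengths and no a priori growth bound.
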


\begin{proof}
By homogeneity it is enough to prove \eqref{eq:Hilbert-block-square} when $\|x\|=1$. Fix $M\ge 1$ and put
\[
L=\frac1M\sum_{k=M}^{2M-1}\|T^kx\|^2.
\]
If $L=0$, there is nothing to prove. Let $G=\{0\le  j<M:\|T^jx\|\le  R\}$ and let $g=\card(G)$. Since $R\ge 1$ and $\|x\|=1$, we have $0\in G$. If $M=1$, then $g=1>M/2$. If $M\ge 2$, the definition of $\Phi_T(R)$ with $N=M-1$ shows that fewer than $2^{-10}(M-1)$ indices $1\le  j\le  M-1$ satisfy $\|T^jx\|>R$. Hence
\[
g>M-2^{-10}(M-1)>\frac M2.
\]

Let $(\varepsilon_j)_{j\in G}$ be independent Rademacher signs and put
\[
h=\frac1{R\sqrt g}\sum_{j\in G}\varepsilon_jT^jx.
\]
By orthogonality of the Rademacher signs,   
\[
\mathbb E\|h\|^2=\frac1{R^2g}\sum_{j\in G}\|T^jx\|^2\le 1.
\]
Markov's inequality then yields $\mathbb P(\|h\|>4)\le 1/16$.

For $1\le  n\le 2M-1$, put
\[
q_n=\frac1{R^2g}\sum_{\substack{j\in G\\ M\le  n+j<2M}}\|T^{n+j}x\|^2.
\]
For each $j\in G$ and each $M\le  k<2M$, there is exactly one $n=k-j$ with $1\le  n\le 2M-1$. It follows that
\[
\sum_{n=1}^{2M-1}q_n=\frac1{R^2g}\sum_{j\in G}\sum_{k=M}^{2M-1}\|T^kx\|^2=\frac{ML}{R^2}.
\]
Since the sum in the definition of $q_n$ contains only terms from the block $M\le  k<2M$,  we have 
\[
q_n\le \frac{ML}{R^2g}<\frac{2L}{R^2}.
\]
Let
\[
Q=\left\{1\le  n\le 2M-1:q_n>\frac{L}{4R^2}\right\}.
\]
We claim that $\card(Q)>M/4$. Otherwise,
\[
\sum_{n=1}^{2M-1}q_n\le \frac M4\frac{2L}{R^2}+(2M-1)\frac{L}{4R^2}<\frac{ML}{R^2},
\]
which contradicts the preceding identity.

Fix $n\in Q$ and put
\[
a_n=\mathbb E\|T^nh\|^2=\frac1{R^2g}\sum_{j\in G}\|T^{n+j}x\|^2.
\]
The terms occurring in $q_n$ are among the terms in this sum. Hence
\[
a_n\ge  q_n>\frac{L}{4R^2}.
\]
We shall also use the following fourth-moment estimate for Hilbert-space-valued Rademacher sums:
\[
\mathbb E\biggl\|\sum_{i=1}^m\varepsilon_iv_i\biggr\|^4\le 3\biggl(\sum_{i=1}^m\|v_i\|^2\biggr)^2.
\]
For completeness, if $A=\sum_{i=1}^m\|v_i\|^2$, then independence of the Rademacher signs implies
\[
\mathbb E\biggl\|\sum_{i=1}^m\varepsilon_iv_i\biggr\|^4=A^2+4\sum_{i<j}\bigl(\operatorname{Re}\langle v_i,v_j\rangle\bigr)^2\le 3A^2.
\]
Applying this estimate to the vectors $(R\sqrt g)^{-1}T^{n+j}x$, $j\in G$, we obtain
\[
\mathbb E\|T^nh\|^4\le 3a_n^2.
\]
The Paley--Zygmund inequality with parameter $1/2$, applied to the nonnegative random variable $\|T^nh\|^2$, now implies
\[
\mathbb P\left(\|T^nh\|^2>\frac12a_n\right)\ge \frac14\frac{a_n^2}{\mathbb E\|T^nh\|^4}\ge \frac1{12}.
\]
Since $a_n>L/(4R^2)$, we conclude that for every $n\in Q$, 
\[
\mathbb P\left(\|T^nh\|>\frac{\sqrt L}{2\sqrt2R}\right)\ge \frac1{12}.
\]

Let $Z$ denote the number of $n\in Q$ such that $\|T^nh\|>\sqrt L/(2\sqrt2R)$, and let $A=\{\|h\|\le 4\}$. Then $\mathbb EZ\ge \card(Q)/12$. Since $0\le  Z\le \card(Q)$ and $\mathbb P(A^c)\le 1/16$, we have 
\[
\mathbb E(Z\mathbf1_A)\ge \mathbb EZ-\card(Q)\mathbb P(A^c)\ge \frac{\card(Q)}{12}-\frac{\card(Q)}{16}=\frac{\card(Q)}{48}.
\]
Thus there is a choice of the signs for which $\|h\|\le 4$ and
\[
Z\ge \frac{\card(Q)}{48}>\frac{M}{192}.
\]
Fix such a choice and put $z=h/4$. Then $\|z\|\le 1$, and for more than $M/192$ indices $1\le  n\le 2M-1$ we have
\[
\|T^nz\|>\frac{\sqrt L}{8\sqrt2R}.
\]
Suppose that $L>128R^4$. Then $\sqrt L/(8\sqrt2R)>R$, and therefore
\[
\frac1{2M-1}\card\{1\le  n\le 2M-1:\|T^nz\|>R\}>\frac{M}{192(2M-1)}>\frac1{384}>2^{-10}.
\]
This contradicts $\Phi_T(R)<2^{-10}$. Hence $L\le 128R^4$, which proves \eqref{eq:Hilbert-block-square}.

It remains to prove \eqref{eq:Hilbert-square-global}. Let $N\ge 2$ and choose $J$ such that $2^J\le  N-1<2^{J+1}$. Applying \eqref{eq:Hilbert-block-square} with $M=2^r$ for $0\le  r\le  J$, we obtain
\[
\sum_{k=1}^{N-1}\|T^kx\|^2\le 128R^4\sum_{r=0}^J2^r\|x\|^2<256R^4N\|x\|^2.
\]
After adding the term $k=0$ and using $R\ge 1$, we have 
\[
\frac1N\sum_{k=0}^{N-1}\|T^kx\|^2\le 257R^4\|x\|^2.
\]
The case $N=1$ is immediate.
\end{proof}

\begin{theorem}\label{thm:Hilbert-position}
Let $H$ be a real or complex Hilbert space and let $T\colon H\to H$ be an operator.  
Then the following are equivalent:
\begin{enumerate}[label=\textup{(\roman*)}]
\item $T$ is mean-L-stable;
\item $T$ is absolutely Ces\`aro bounded;
\item $T$ is Ces\`aro square bounded.
\end{enumerate}
If $H$ is complex, these conditions imply that $T$ is uniformly Kreiss bounded.
\end{theorem}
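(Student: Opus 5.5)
The plan is to prove the cycle (i)$\Rightarrow$(iii)$\Rightarrow$(ii)$\Rightarrow$(i), and then derive uniform Kreiss boundedness from (ii).

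\emph{(i)$\Rightarrow$(iii).} Assume $T$ is mean-L-stable. By Proposition~\ref{prop:uniform-tail}, $\Phi_T(R)\to0$ as $R\to\infty$. Hence we may fix $R\ge1$ with $\Phi_T(R)<2^{-10}$. Proposition~\ref{prop:Hilbert-square} then gives
\[
\sup_{N}\frac1N\sum_{k=0}^{N-1}\|T^kx\|^2\le 257R^4\|x\|^2 ,
\]
which is Ces\`aro square boundedness. Proposition~\ref{prop:Hilbert-square} was stated for an arbitrary Hilbert space, but its proof uses the fourth-moment identity with $\operatorname{Re}\langle v_i,v_j\rangle$. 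That identity holds verbatim in real Hilbert spaces, so the real case needs no extra work; I will only point this out.

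\emph{(iii)$\Rightarrow$(ii).} This is H\"older's (Cauchy--Schwarz) inequality:
\[
\frac1N\sum_{k<N}\|T^kx\|\le\Bigl(\frac1N\sum_{k<N}\|T^kx\|^2\Bigr)^{1/2}\le \sqrt{C}\,\|x\| .
\]

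\emph{(ii)$\Rightarrow$(i).} Suppose $T$ is absolutely Ces\`aro bounded with constant $C$. For $\|x\|\le1$ and $N\ge1$, Markov's inequality gives
\[
\frac1N\card\{1\le j\le N:\|T^jx\|>R\}\le \frac1{NR}\sum_{j=1}^N\|T^jx\|\le \frac{2C}{R},
\]
where the sum over $1\le j\le N$ is bounded by the Ces\`aro sum of length $N+1$. Thus $\Phi_T(R)\le 2C/R\to0$, and Proposition~\ref{prop:uniform-tail} yields (i).

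\emph{Uniform Kreiss boundedness in the complex case.} Let $|\lambda|=1$. The triangle inequality gives
\[
\Bigl\|\frac1n\sum_{k<n}(\lambda T)^kx\Bigr\|\le\frac1n\sum_{k<n}\|T^kx\|\le C\|x\|,
\]
so the Ces\`aro means of $\lambda T$ are bounded by $C$ uniformly in $\lambda$ and $n$. The equivalent formulation of \cite{MSZ05} quoted in the introduction then shows that $T$ is uniformly Kreiss bounded.

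Every step reduces to results already proved. The only real content is Proposition~\ref{prop:Hilbert-square}, so I expect no obstacle beyond checking that its constants and the real-scalar case apply as stated.
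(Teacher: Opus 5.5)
Your proposal is correct and follows the paper's proof essentially step for step. You use Proposition~\ref{prop:uniform-tail} together with Proposition~\ref{prop:Hilbert-square} for (i)$\Rightarrow$(iii), Cauchy--Schwarz for (iii)$\Rightarrow$(ii), Markov's inequality for (ii)$\Rightarrow$(i), and the triangle inequality for uniform Kreiss boundedness; your side remark that the fourth-moment bound, and hence Proposition~\ref{prop:Hilbert-square}, holds verbatim over the reals is also correct.
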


\begin{proof}
By Proposition~\ref{prop:uniform-tail}, mean-L-stability provides $R\ge 1$ such that $\Phi_T(R)<2^{-10}$.  Proposition~\ref{prop:Hilbert-square} then yields Ces\`aro square boundedness.  
The Cauchy--Schwarz inequality yields absolute Ces\`aro boundedness, while absolute Ces\`aro boundedness implies mean-L-stability by Markov's inequality.  
In the complex case absolute Ces\`aro boundedness implies uniform Kreiss boundedness by the definitions.
\end{proof}

We next compare mean-L-stability with absolute Ces\`aro boundedness and uniform Kreiss boundedness for positive operators on Banach lattices.

\begin{proposition}\label{prop:positive-lattice-UKB}
Let $T$ be a positive and mean-L-stable operator on a Banach lattice $X$. If $X$ is complex, then there is $C_T>0$ such that
\[
\sup_{N\ge 1}\sup_{|\gamma_0|=\cdots=|\gamma_{N-1}|=1}\biggl\|\frac1N\sum_{k=0}^{N-1}\gamma_kT^k\biggr\|\le C_T.
\]
In particular, $T$ is uniformly Kreiss bounded. 
If $X$ is an AL-space, then $T$ is absolutely Ces\`aro bounded.
\end{proposition}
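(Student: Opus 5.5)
The plan is to reduce the weighted Ces\`aro bound to one estimate on positive vectors and unweighted averages, and then prove that estimate with a positivity ``convolution'' argument in the spirit of Propositions~\ref{prop:AM-power-bounded} and~\ref{prop:Hilbert-square}. I have not closed the final step of that argument; it is the main obstacle, discussed below.

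\emph{Reduction to positive vectors.} For $z\in X$ and unimodular $\gamma_k$, positivity gives
\[
\Bigl|\sum_{k<N}\gamma_kT^kz\Bigr|\le\sum_{k<N}|T^kz|\le\sum_{k<N}T^k|z| .
\]
The lattice norm is monotone and $\||z|\|=\|z\|$. Hence it suffices to find $C_T$ with
\[
\Bigl\|\sum_{k<N}T^kx\Bigr\|\le C_TN \quad\text{for all } x\in B_X\cap X_+ \text{ and } N\ge1 .
\]
Uniform Kreiss boundedness is then the special case $\gamma_k=\lambda^k$. The AL-space statement also follows at once. For $x\ge0$, additivity of the norm on $X_+$ gives $\sum_{k<N}\|T^kx\|=\|\sum_{k<N}T^kx\|\le C_TN\|x\|$. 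For general $z$, we use $\|T^kz\|\le\|T^k|z|\|$.

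\emph{Main step.} By Proposition~\ref{prop:uniform-tail}, fix $K\ge1$ with $\Phi_T(K)$ small, say $<2^{-10}$. Fix $x\in B_X\cap X_+$ and $M\ge1$, and put
\[
L=\frac1M\Bigl\|\sum_{k=M}^{2M-1}T^kx\Bigr\| .
\]
As in Proposition~\ref{prop:AM-power-bounded}, let $G=\{0\le i<M:\|T^ix\|\le K\}$, so $\card G>M/2$. The natural test vector is
\[
v=\frac1{K\card G}\sum_{i\in G}T^ix ,
\]
which is positive with $\|v\|\le1$. By positivity, $T^mv$ dominates
\[
\frac1{K\card G}\sum_{i\in G,\ M\le i+m<2M}T^{i+m}x .
\]
Summing over $1\le m<2M$, every index $k\in[M,2M)$ is reached exactly once from each $i\in G$. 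Therefore
\[
\sum_{m=1}^{2M-1}T^mv\ge\frac1K\sum_{k=M}^{2M-1}T^kx ,
\]
and so $\sum_m\|T^mv\|\ge ML/K$. Once the dyadic block estimate $L\le C(K)$ is known uniformly in $x$ and $M$, summing over dyadic blocks as at the end of Proposition~\ref{prop:Hilbert-square} gives the global Ces\`aro bound.

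\emph{Main obstacle.} The obstacle is turning the lower bound on $\sum_m\|T^mv\|$ into a \emph{density} statement, namely that $\|T^mv\|>K$ for a proportion of $m$ bounded below independently of $M$. Only such a statement contradicts $\Phi_T(K)<2^{-10}$. In the Hilbert case this was done with Rademacher signs and Paley--Zygmund. In a lattice I would instead use a pigeonhole on the partial block sums
\[
P_m=\sum_{i\in G,\ M\le i+m<2M}T^{i+m}x ,
\]
whose total mass is controlled from above by $ML\cdot\card G$. The aim is to show that a positive proportion of $m$ have $\|P_m\|\gtrsim L\card G$. If this fails, my first attempt would be a bootstrap on
\[
a(N)=\sup_{x\in B_X\cap X_+}\frac1N\Bigl\|\sum_{k<N}T^kx\Bigr\| .
\]
I would split each block into good and bad times. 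The good times contribute at most $KN$. For the bad times, I would apply the same convolution identity to show that they contribute at most a fixed fraction $\theta<1$ of $a(2N)$. This would give an estimate of the form $a(2N)\le C(K)+\theta\,a(2N)$, and hence a uniform bound.
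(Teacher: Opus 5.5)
Your reductions are correct and coincide with the last steps of the paper's proof: passing to $x\ge0$ and unweighted averages via $\bigl|\sum\gamma_kT^kz\bigr|\le\sum T^k|z|$, the AL-space consequence, and the dyadic summation. The gap is the main step, the dyadic block estimate $L\le C(K)$, which you leave open. The proposed bootstrap depends on the claim that the bad times contribute at most $\theta\,a(2N)$ with $\theta<1$, and nothing supports that claim. As written, there is no proof.

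The lower bound $\sum_m\|T^mv\|\ge ML/K$ cannot be turned into a density statement, because you have no ceiling on the individual norms $\|T^mv\|$. The missing observation is that the partial sums $P_m$ do have such a ceiling. For fixed $m$ the map $i\mapsto i+m$ is injective and all terms are positive, so $0\le P_m\le\sum_{k=M}^{2M-1}T^kx$, hence $\|P_m\|\le ML$. Your remark that the total mass is controlled ``from above'' points the wrong way. What you need is the lower bound $\sum_m\|P_m\|\ge\bigl\|\sum_mP_m\bigr\|=\card(G)\,ML$, which you essentially already have, combined with this per-term upper bound. The pigeonhole then closes at once. If at most $M/4$ indices $1\le m\le2M-1$ had $\|P_m\|>\card(G)L/4$, then for $L>0$
\[
\card(G)\,ML\le\sum_{m=1}^{2M-1}\|P_m\|\le\frac M4\,ML+2M\,\frac{\card(G)\,L}{4}<\card(G)\,ML,
\]
since $\card(G)>M/2$, a contradiction. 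So more than $M/4$ indices satisfy $\|T^mv\|\ge\|P_m\|/(K\card(G))>L/(4K)$. If $L>4K^2$, these all satisfy $\|T^mv\|>K$, a proportion exceeding $1/8$ of $[1,2M-1]$. That contradicts $\Phi_T(K)<2^{-10}$, so $L\le4K^2$.

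The paper runs exactly this pigeonhole, but first scalarizes. It fixes a positive unit functional $x^*$, sets $a_k=\langle x^*,T^kx\rangle$, and works with
\[
b_n=\frac1{R\card(G)}\sum_{j\in G,\ M\le n+j<2M}a_{n+j}.
\]
It uses the same injectivity ceiling $b_n\le BM/(R\card(G))$ together with $b_n\le\langle x^*,T^nh\rangle\le\|T^nh\|$. It then recovers the norm bound from a positive norming functional of the positive vector $\frac1N\sum_kT^kx$. Your norm-level version is equally valid and slightly more direct once the ceiling $\|P_m\|\le ML$ is added.
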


\begin{proof}
By Proposition~\ref{prop:uniform-tail}, choose $R\ge 1$ such that $\Phi_T(R)<1/10$. 
Fix $x\in X_+$ and $x^*\in(X^*)_+$ with $\|x\|=\|x^*\|=1$, and write $a_k=\langle x^*,T^kx\rangle\ge 0$. 
We claim that
\begin{equation}\label{eq:positive-dyadic-block}
\frac1M\sum_{k=M}^{2M-1}a_k\le4R^2,\quad \forall \ M\ge 1.
\end{equation}
Suppose instead that $B=M^{-1}\sum_{k=M}^{2M-1}a_k>4R^2$ for some $M$. 
Let $G=\{0\le j<M:\|T^jx\|\le R\}$. If $M=1$, then $\card(G)=1$. If $M\ge2$, the definition of $\Phi_T(R)$ with $N=M-1$ implies
\[
\card(G)>M-\frac{M-1}{10}.
\]
Hence in all cases $\card(G)>9M/10$. 
Set
\[
h=\frac1{R\card(G)}\sum_{j\in G}T^jx.
\]
Then $h\ge 0$ and $\|h\|\le 1$. For $1\le n\le2M-1$, put
\[
b_n=\frac1{R\card(G)}\sum_{\substack{j\in G\\ M\le n+j<2M}}a_{n+j}.
\]
By positivity, $0\le b_n\le\langle x^*,T^nh\rangle\le\|T^nh\|$. 
Double counting yields $\sum_{n=1}^{2M-1}b_n=BM/R$, while $b_n<10B/(9R)$. Let $Q=\{1\le n\le2M-1:b_n>B/(4R)\}$. 
If $\card(Q)\le M/5$, then
\[
\frac{BM}{R}=\sum_{n=1}^{2M-1}b_n\le\frac M5\frac{10B}{9R}+2M\frac{B}{4R}<\frac{BM}{R},
\]
a contradiction. 
Hence $\card(Q)>M/5$. 
Since $B/(4R)>R$, more than $M/5$ indices $1\le n\le2M-1$ satisfy $\|T^nh\|>R$. 
Therefore
\[
\frac1{2M-1}\card\{1\le n\le2M-1:\|T^nh\|>R\}>\frac1{10},
\]
contrary to the choice of $R$. 
This proves \eqref{eq:positive-dyadic-block}.

A dyadic decomposition, together with $a_0\le 1$, now yields
\begin{equation}\label{eq:positive-scalar-Cesaro}
\sup_{N\ge 1}\frac1N\sum_{k=0}^{N-1}\langle x^*,T^kx\rangle\le8R^2
\end{equation}
for positive unit vectors $x$ and $x^*$. 
For $x\in X_+$, the positive vector $N^{-1}\sum_{k=0}^{N-1}T^kx$ has a positive norming functional, 
so \eqref{eq:positive-scalar-Cesaro} implies
\[
\biggl\|\frac1N\sum_{k=0}^{N-1}T^kx\biggr\|\le8R^2\|x\|.
\]
Suppose now that $X$ is complex. 
For arbitrary $x\in X$ and unimodular scalars $\gamma_0,\ldots,\gamma_{N-1}$, positivity yields
\[
\biggl|\sum_{k=0}^{N-1}\gamma_kT^kx\biggr|\le\sum_{k=0}^{N-1}|T^kx|\le\sum_{k=0}^{N-1}T^k|x|.
\]
Hence
\[
\biggl\|\frac1N\sum_{k=0}^{N-1}\gamma_kT^k\biggr\|\le8R^2.
\]
In particular, $T$ is uniformly Kreiss bounded.

If $X$ is an AL-space, then for $x\ge 0$ the norm is additive on the positive cone, and hence
\[
\frac1N\sum_{k=0}^{N-1}\|T^kx\|=\biggl\|\frac1N\sum_{k=0}^{N-1}T^kx\biggr\|\le8R^2\|x\|.
\]
For arbitrary $x$, positivity gives $|T^kx|\le T^k|x|$, and therefore the same estimate follows by applying the preceding inequality to $|x|$. 
Thus $T$ is absolutely Ces\`aro bounded.
\end{proof}

We can now prove Theorem~\ref{thm:relationships}.

\begin{proof}[Proof of Theorem~\ref{thm:relationships}]
Part \textup{(i)} follows from Proposition~\ref{prop:AM-power-bounded}, since a positive mean-L-stable operator on an AM-space is power bounded. 
Power boundedness implies absolute Ces\`aro boundedness, and absolute Ces\`aro boundedness implies mean-L-stability by Markov's inequality.

For \textup{(ii)}, the Hilbert-space case follows from Theorem~\ref{thm:Hilbert-position}. For a positive operator on an AL-space, Proposition~\ref{prop:positive-lattice-UKB} shows that mean-L-stability implies absolute Ces\`aro boundedness, while the converse follows from Markov's inequality.

Part \textup{(iii)} follows from Proposition~\ref{prop:positive-lattice-UKB}.
\end{proof}

\section{Power growth of mean-L-stable operators}
\label{sec:power-growth}

In this section, we prove Theorem~\ref{thm:main-growth}. We begin with a common estimate that converts uniform tail control into bounds determined by the geometry of the underlying space.

\begin{theorem}\label{thm:uniform-common}
Let $T$ be an operator on a Banach space $X$.
\begin{enumerate}[label=\textup{(\roman*)}]
\item If $\Phi_T(K)<2^{-3}$ for some $K\ge 1$, then  
\[
 \|T^n\|\leq 4K^2 \Rad_X(n), \quad \forall \ n\ge 1.
\]
In particular, if $X$ has Rademacher type $p$ and $T$ is mean-L-stable, then $\|T^n\|=O(n^{1/p})$.
\item Assume that $X$ is a Banach lattice and $T$ is a positive operator.
If $\Phi_T(K)<2^{-1}$ for some $K\ge 1$, then 
\[
 \|T^n\|\leq K^2\Env_X(n) \quad \forall \ n\ge 1.
\]
If, in addition, $X$ is $p$-convex, this implies $\|T^n\|=O(n^{1/p})$.
\end{enumerate}
\end{theorem}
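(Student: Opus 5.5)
Both parts follow the pattern of Proposition~\ref{prop:AM-power-bounded}. First we produce many ``good'' earlier times whose orbit vectors are bounded and all land on $T^nx$. Then we combine them into one vector of norm at most one whose orbit is large at many times. This contradicts the assumed bound on $\Phi_T(K)$.

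For \textup{(ii)}, fix $n$ and $x\in B_X\cap X_+$. Put $G=\{0\le k<n:\|T^kx\|\le K\}$, so $\card(G)>n/2$ by the definition of $\Phi_T(K)$ with $N=n-1$; the case $n=1$ is trivial. Let $J=\{n-k:k\in G\}$ and $u_j=K^{-1}T^{n-j}x\in B_X\cap X_+$. Set
\[
y=\Env_X(n)^{-1}\bigvee_{j\in J}u_j ,
\]
so that $\|y\|\le1$. Positivity gives $T^jy\ge \Env_X(n)^{-1}K^{-1}T^nx$ for every $j\in J$. If $\|T^nx\|>K^2\Env_X(n)$, then $\|T^jy\|>K$ for more than $n/2$ indices $1\le j\le n$, which is impossible. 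The passage to general $z$ uses $|T^nz|\le T^n|z|$, exactly as before. For a $p$-convex lattice, $p$-convexity gives
\[
\Bigl\|\bigvee x_i\Bigr\|\le\Bigl\|\Bigl(\sum x_i^p\Bigr)^{1/p}\Bigr\|\le M_p(X)\,n^{1/p},
\]
hence $\Env_X(n)\le M_p(X)n^{1/p}$. Together with Proposition~\ref{prop:uniform-tail}, which provides $K$ with $\Phi_T(K)<1/2$, this yields the $O(n^{1/p})$ bound.

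For \textup{(i)} there is no lattice supremum, so we replace it by a random signed sum and argue as in Proposition~\ref{prop:Hilbert-square}. Fix $x\in B_X$ and define $G$, $J$, $u_j$ as above; now $\card(G)>7n/8$. Put
\[
h=\frac{1}{4\Rad_X(n)}\sum_{j\in J}\varepsilon_ju_j .
\]
Since $\mathbb E\|\sum\varepsilon_ju_j\|\le\Rad_X(n)$, Markov's inequality gives $\mathbb P(\|h\|\le1)\ge3/4$. The key step is a lower bound for $T^mh$ at each $m\in J$. There $T^mh=(4\Rad_X(n))^{-1}\bigl(\varepsilon_mK^{-1}T^nx+\sum_{j\ne m}\varepsilon_jT^mu_j\bigr)$. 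Flipping the sign $\varepsilon_m$ leaves the remainder unchanged and the two values differ by $2\varepsilon_mK^{-1}T^nx/(4\Rad_X(n))$. By the triangle inequality at least one of the two has norm $\ge \|T^nx\|/(4K\Rad_X(n))$, so this event has probability at least $1/2$ for each $m\in J$.

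Now suppose $\|T^nx\|>4K^2\Rad_X(n)$. Then $\|T^mh\|>K$ with probability at least $1/2$ for each $m\in J$. The expected number of such $m$ is at least $\card(J)/2>7n/16$. Intersecting with the event $\{\|h\|\le1\}$, whose complement has probability at most $1/4$, leaves an expected count of at least $7n/16-n/4=3n/16$. So there are signs with $\|h\|\le1$ and more than $n/8$ indices $1\le m\le n$ satisfying $\|T^mh\|>K$, contradicting $\Phi_T(K)<1/8$. The thresholds need care: with $\card(G)>7n/8$ the count is $>3n/16>n/8$, so the constants close.

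The main obstacle is this sign-flipping lower bound replacing the lattice domination. With it, $\|T^n\|\le4K^2\Rad_X(n)$ follows. Type $p$ gives $\Rad_X(n)\le Cn^{1/p}$, and the trivial bound $\Rad_X(n)\le n$ gives $O(n)$ on every Banach space.
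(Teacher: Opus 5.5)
Your proposal is correct and follows the paper's proof step for step: the same good set $G$, the same vectors $u_j=K^{-1}T^{n-j}x$ with $T^ju_j=K^{-1}T^nx$, a lattice supremum in (ii), and in (i) a Rademacher sum with a sign-flipping lower bound at each $j\in J$ combined with Markov's inequality. The differences are cosmetic: you get the sign-flipping bound from $\|2a\|\le\|r+a\|+\|r-a\|$ rather than a norming functional; you normalize beforehand by $4\Rad_X(n)$ (resp.\ $\Env_X(n)$) rather than dividing by $\|Y\|$ (resp.\ $\|y\|$) afterwards; and you combine the two random events via $\mathbb E(Z\mathbf 1_A)$ rather than a union bound.
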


\begin{proof}
\textup{(i)} Fix $n\ge 1$ and $x\in B_X$. 
If $T^nx=0$, there is nothing to prove. 
Put
\[
 G=\{0\leq k<n:\|T^kx\|\leq K\}.
\]
If $n=1$, then $G=\{0\}$ and hence $\card(G)=1>7n/8$. If $n\geq2$, the definition of $\Phi_T(K)$ with $N=n-1$ yields
\[
 \card(G)>n-\frac{n-1}{8}>\frac{7n}{8}.
\]
For $k\in G$, set $j=n-k$ and $ u_j=K^{-1}T^kx$. Then $\|u_j\|\le 1$ and
\begin{equation}\label{eq:common-endpoint}
 T^ju_j=K^{-1}T^nx.
\end{equation}
Let $J=\{n-k:k\in G\}$, choose independent Rademacher signs $(\varepsilon_j)_{j\in J}$, and put
\[
 Y=\sum_{j\in J}\varepsilon_ju_j.
\]

Choose $f\in B_{X^*}$ such that $f(T^nx)=\|T^nx\|$. We claim that, for every $j\in J$,
\[
 \mathbb P\bigl(\|T^jY\|\geq K^{-1}\|T^nx\|\bigr)\geq\frac12.
\]
Indeed, fix all signs except $\varepsilon_j$. By \eqref{eq:common-endpoint}, we have 
\[
 f(T^jY)=b+\varepsilon_jK^{-1}\|T^nx\|
\]
for some scalar $b$ independent of $\varepsilon_j$. Since
\[
 \max\{|b+a|,|b-a|\}\geq a
\]
for every scalar $b$ and $a\ge 0$, at least one choice of $\varepsilon_j$ satisfies
$|f(T^jY)|\geq K^{-1}\|T^nx\|$. This proves the claim.

Let
\[
 Z=\card\{j\in J:\|T^jY\|\geq K^{-1}\|T^nx\|\}.
\]
Then $\mathbb EZ\geq\card(J)/2$. Since $0\leq Z\leq\card(J)$, we have 
\[
 \mathbb P\left(Z\geq\frac{\card(J)}4\right)\geq\frac13.
\]
Moreover, $\|u_j\|\le 1$ and $\card(J)\leq n$, so
\[
 \mathbb E\|Y\|\leq\Rad_X(n).
\]
Markov's inequality therefore yields
\[
 \mathbb P\bigl(\|Y\|\leq4\Rad_X(n)\bigr)\geq\frac34.
\]
Hence there is a choice of signs such that
\[
 Z\geq\frac{\card(J)}4
 \quad\text{and}\quad
 \|Y\|\leq4\Rad_X(n).
\]

Suppose that $\|T^nx\|>4K^2\Rad_X(n)$. For every $j$ counted by $Z$, we then have
\[
 \|T^jY\|\geq K^{-1}\|T^nx\|>K\|Y\|.
\]
Moreover,
\[
 Z\geq\frac{\card(J)}4>\frac{7n}{32}>\frac n8.
\]
In particular $Y\neq0$. Thus the unit vector $Y/\|Y\|$ satisfies
\[
 \frac1n\card\left\{1\leq j\leq n:
 \left\|T^j\frac{Y}{\|Y\|}\right\|>K\right\}>\frac18,
\]
contrary to $\Phi_T(K)<2^{-3}$. Hence $\|T^nx\|\leq4K^2\Rad_X(n)$. Since $x\in B_X$ was arbitrary, we have 
\[
 \|T^n\|\leq4K^2\Rad_X(n),\quad \forall \ n\ge 1.
\]

If $X$ has Rademacher type $p$, then there is $C_X>0$ such that
\[
 \Rad_X(n)\leq C_Xn^{1/p},\quad \forall \ n\ge 1.
\]
If $T$ is mean-L-stable, choose $K\ge 1$ with $\Phi_T(K)<2^{-3}$. 
The preceding estimate shows 
\[
 \|T^n\|\leq4K^2C_Xn^{1/p},
\]
and hence $\|T^n\|=O(n^{1/p})$.

\textup{(ii)} 
Let $X$ be a Banach lattice and let $T$ be positive. 
Fix $n\ge 1$ and $x\in B_X\cap X_+$. Put
\[
 G=\{0\leq k<n:\|T^kx\|\leq K\}\text{ and }
 J=\{n-k:k\in G\}.
\]
If $n=1$, then $\card(J)=1>n/2$. If $n\geq2$, the definition of $\Phi_T(K)$ with $N=n-1$ yields 
\[
 \card(J)=\card(G)>\frac n2.
\]
For $j=n-k\in J$, set $u_j=K^{-1}T^kx$. Then $u_j\in B_X\cap X_+$ and
\[
 T^ju_j=K^{-1}T^nx, \quad \forall \ j\in J.
\]
Define $y=\bigvee_{j\in J}u_j$. Then $\|y\|\leq\Env_X(n)$ and, by positivity,
\[
 T^jy\geq T^ju_j=K^{-1}T^nx, \quad \forall \ j\in J.
\]
Suppose that $\|T^nx\|>K^2\Env_X(n)$. Then $y\neq0$, and for every $j\in J$,
\[
 \left\|T^j\frac{y}{\|y\|}\right\|
 \geq\frac{K^{-1}\|T^nx\|}{\|y\|}>K.
\]
Since $\card(J)>n/2$, this contradicts $\Phi_T(K)<2^{-1}$. Therefore
\[
 \|T^nx\|\leq K^2\Env_X(n)
\]
for every $x\in B_X\cap X_+$. For arbitrary $z\in B_X$, positivity gives $|T^nz|\leq T^n|z|$,  and hence $\|T^nz\|\leq K^2\Env_X(n)$. Thus for every $n\ge 1$, 
\[
 \|T^n\|\leq K^2\Env_X(n).
\]

Finally, if $X$ is $p$-convex with constant $M_p(X)$, then for $x_1,\ldots,x_n\in B_X\cap X_+$,
\[
 \|x_1\vee\cdots\vee x_n\|
 \leq
 \biggl\|\biggl(\sum_{j=1}^n x_j^p\biggr)^{1/p}\biggr\|
 \leq M_p(X)n^{1/p}.
\]
Consequently, $\Env_X(n)\leq M_p(X)n^{1/p}$, and therefore
\[
 \|T^n\|\leq K^2 M_p(X)n^{1/p}.
\]
This proves \textup{(ii)}.
\end{proof}

Hilbert spaces have Rademacher type $2$, whereas $\ell^p$ has Rademacher type $\min\{p,2\}$; see, for example, \cite{AK16}.  
Hence every mean-L-stable operator on a Hilbert space satisfies $\|T^n\|=O(n^{1/2})$, while on $\ell^p$ one has the preliminary estimate
\[
 \|T^n\|=O\bigl(n^{1/\min\{p,2\}}\bigr).
\]
For positive operators on $\ell^p$ the order argument improves this to $O(n^{1/p})$.  

For positive operators on an AM-space, the lattice estimate reduces to power boundedness. 
Thus, for positive operators on the classical space $c_0$, mean-L-stability is equivalent to power boundedness.
Here the AM-space order structure of $c_0$ is essential.
It should not be confused with a $c_0$-direct sum of nontrivial Banach lattices.  
The spaces $X_p$ in Section~\ref{sec:critical} have a $c_0$ outer norm, but their internal $\ell^p$ blocks allow lattice suprema of $n$ unit vectors to have norm $n^{1/p}$.

The following proposition follows from Theorem~\ref{thm:relationships}\textup{(ii)} and \cite[Theorem~4.4]{CCEL20}.
We nevertheless give a direct proof based on Proposition~\ref{prop:Hilbert-square}.

\begin{proposition}\label{prop:Hilbert-strict-growth}
Let $T$ be a mean-L-stable operator on a Hilbert space $H$. Then there is $c_T\in(0,1/2)$ such that
\[
 \|T^n\|=O(n^{1/2-c_T}).
\]
\end{proposition}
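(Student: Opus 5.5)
The plan is to combine the Ces\`aro square bound of Proposition~\ref{prop:Hilbert-square} with the numerical Lemma~\ref{lem:reciprocal-sequence}, applied to the sequence $a_n=\|T^n\|^2$.

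First, by Proposition~\ref{prop:uniform-tail}, mean-L-stability gives some $R\ge 1$ with $\Phi_T(R)<2^{-10}$. Proposition~\ref{prop:Hilbert-square} then yields a constant $C=257R^4$ such that
\[
\sum_{k=0}^{N-1}\|T^kx\|^2\le CN\|x\|^2\qquad\text{for all } x\in H,\ N\ge 1.
\]
If $T^m=0$ for some $m$, the conclusion is trivial. So I assume $a_n=\|T^n\|^2>0$ for all $n$, which makes $(a_n)$ a positive sequence as the lemma requires.

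The key step is a submultiplicativity lower bound on the orbit. Fix $N\ge 1$ and a unit vector $x$. For $0\le k<N$ we have $\|T^Nx\|\le\|T^{N-k}\|\,\|T^kx\|$, hence
\[
\|T^kx\|^2\ge \frac{\|T^Nx\|^2}{a_{N-k}}.
\]
Summing over $0\le k<N$ and substituting $r=N-k\in\{1,\dots,N\}$ gives
\[
\|T^Nx\|^2\sum_{r=1}^{N}\frac1{a_r}\le\sum_{k=0}^{N-1}\|T^kx\|^2\le CN.
\]
Taking the supremum over unit vectors $x$ gives
\[
a_N\sum_{r=1}^{N}\frac1{a_r}\le CN .
\]
The sum over $r\le\lfloor N/8\rfloor$ is a sub-sum of this one, so hypothesis \eqref{eq:reciprocal-growth} holds with $B=\max\{1,C\}$ for every $N$.

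Lemma~\ref{lem:reciprocal-sequence} then provides $C'\ge1$ and $\delta>0$ with $\|T^n\|^2\le C'n^{1-\delta}$. Taking square roots gives $\|T^n\|\le \sqrt{C'}\,n^{1/2-\delta/2}$. I set $c_T=\min\{\delta/2,1/4\}$, which lies in $(0,1/2)$; decreasing the exponent gain only weakens the bound, so $\|T^n\|=O(n^{1/2-c_T})$.

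I expect no serious obstacle. The only points needing care are the nilpotent case, where positivity of $(a_n)$ fails, and checking that the index range in the lemma is covered. Both are handled above, since the Hilbert-space estimate controls the full reciprocal sum up to $N$.
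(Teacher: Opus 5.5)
Your proposal is correct and follows the paper's proof essentially step for step: the Ces\`aro square bound of Proposition~\ref{prop:Hilbert-square}, then submultiplicativity to get $\|T^N\|^2\sum_{r=1}^N\|T^r\|^{-2}\le CN$, then Lemma~\ref{lem:reciprocal-sequence} applied to $a_n=\|T^n\|^2$, with the nilpotent case handled separately. Two small differences are harmless: you take the supremum over unit vectors directly, whereas the paper picks a near-norming $x$ at the cost of a factor $4$; and you omit the preliminary $O(\sqrt n)$ bound that the paper cites from Theorem~\ref{thm:uniform-common}\textup{(i)}, which you correctly recognize is not needed to apply the lemma.
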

\begin{proof}
Choose $R\ge 1$ with $\Phi_T(R)<2^{-10}$.  
By Proposition~\ref{prop:Hilbert-square},
\[
 \sum_{k=0}^{N-1}\|T^kx\|^2\le C_2N\|x\|^2
\]
for some $C_2>0$.  
If $T$ is not nilpotent, fix $N\ge1$ and choose $x\in B_H$ with $\|T^Nx\|\ge\|T^N\|/2$.  Since
\[
 \|T^Nx\|\le\|T^{N-k}\|\,\|T^kx\|,
\]
we obtain
\[
 \|T^N\|^2\sum_{r=1}^N\frac1{\|T^r\|^2}\le4C_2N.
\]
Theorem~\ref{thm:uniform-common}\textup{(i)} and Hilbertian type $2$ yield the preliminary estimate $\|T^n\|=O(\sqrt n)$.  
Lemma~\ref{lem:reciprocal-sequence}, applied to $a_n=\|T^n\|^2$, now yields $\|T^n\|=O(n^{1/2-c_T})$ for some $c_T>0$. After decreasing $c_T$ if necessary, we may assume that $c_T<1/2$. The nilpotent case is immediate.
\end{proof}

We next consider positive operators on $p$-convex Banach lattices.  
The quantity $\mathfrak C_{p,X}$ introduced in Section~\ref{sec:prelim} measures the lower $p$-estimate required in the argument below.

\begin{theorem}\label{thm:positive-reciprocal}
Let $T$ be a positive operator on a $p$-convex Banach lattice $X$, where $1\le p<\infty$. 
Suppose that $\Phi_T(R)<2^{-12}$ for some $R\ge 1$.
\begin{enumerate}[label=\textup{(\roman*)}]
    \item If $T$ is not nilpotent, then
\begin{equation}\label{eq:positive-reciprocal-new}
 \|T^N\|^p\sum_{r=1}^m\frac1{\|T^r\|^p}
 \le 2M_p(X)^pR^{2p}N\,\mathfrak C_{p,X}(m)^p
\end{equation}
for all $N,m\in\N$ with $1\le m\le N/8$. 
\item If $\mathfrak C_{p,X}(m)^p=o(\log(m+1))$,
then 
\[
\|T^n\|=o(n^{1/p}).
\]
\item If $\sup_{m\ge 1}\mathfrak C_{p,X}(m)<\infty$, then there are $C\ge 1$ and $c_T\in(0,1/p)$ such that
\[
\|T^n\|\le Cn^{1/p-c_T}, \quad \forall \ n\ge 1.
\]
\end{enumerate}
\end{theorem}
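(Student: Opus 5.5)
The plan is to follow the Hilbert-space strategy, replacing Rademacher averaging by the lattice $p$-sum. Part (i) is a positive-lattice analogue of Proposition~\ref{prop:Hilbert-square} combined with the submultiplicative trick from Proposition~\ref{prop:Hilbert-strict-growth}. Parts (ii) and (iii) are then numerical consequences.

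\textbf{Part (i).} Fix $N$, $m\le N/8$, and $x\in B_X\cap X_+$ with $\|T^Nx\|\ge\|T^N\|/2$. This is possible because $|T^Nz|\le T^N|z|$, so positive unit vectors almost norm $T^N$.
\begin{itemize}
\item Let $G=\{0\le k<N:\|T^kx\|\le R\}$. Since $\Phi_T(R)<2^{-12}$, the set $G$ contains all but a tiny proportion of $[0,N)$.
\item Among $k\in[N-m,N)$, I would select good indices $k$ with $N-k=r\in\{1,\dots,m\}$. Then $T^Nx=T^{r}T^{k}x$ gives $\|T^Nx\|\le\|T^r\|\,\|T^kx\|$, hence $\|T^kx\|\ge\|T^N\|/(2\|T^r\|)$.
\item The difficulty is that the exceptional set may concentrate in the window $[N-m,N)$. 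To avoid this, I would use windows shifted over many starting points and average: for each $s$ in a large set of ``good'' times, take $y_i=T^{s_i}x$ and note that $\|T^{N'}\|$-type bounds hold.
\end{itemize}
Concretely, the intended vector is
\[
h=\Bigl(\sum_{i}(T^{k_i}x)^p\Bigr)^{1/p}/ (M_p(X)R\,\ell^{1/p}),
\]
built from $\ell\le m$ good indices $k_i$. By $p$-convexity $\|h\|\le 1$.

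For the image, positivity and $p$-convexity of the functional calculus give $T^jh\ge \bigl(\sum_i (T^{j+k_i}x)^p\bigr)^{1/p}/(M_pR\ell^{1/p})$. Here I would use the inequality $T(\sum y_i^p)^{1/p}\ge(\sum (Ty_i)^p)^{1/p}$, valid for positive $T$. The definition of $\mathfrak C_{p,X}(m)$ then gives
\[
\|T^jh\|\ge \Bigl(\sum_i\|T^{j+k_i}x\|^p\Bigr)^{1/p}\big/\bigl(M_pR\,\ell^{1/p}\mathfrak C_{p,X}(m)\bigr).
\]

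Next I would bound $\|T^{j+k_i}x\|\ge\|T^Nx\|/\|T^{N-j-k_i}\|$ and choose $j$ ranging over a set of size comparable to $N$. A double-counting argument, as in Propositions~\ref{prop:Hilbert-square} and~\ref{prop:positive-lattice-UKB}, then produces many $j\le 2N$ with $\|T^jh\|>R$ unless
\[
\|T^N\|^p\sum_{r\le m}\|T^r\|^{-p}\lesssim M_p^pR^{2p}N\,\mathfrak C_{p,X}(m)^p.
\]
That would contradict $\Phi_T(R)<2^{-12}$, which gives \eqref{eq:positive-reciprocal-new}. The main obstacle is arranging the indices $k_i$ and the pivot $j$ so that $r=N-j-k_i$ runs over $\{1,\dots,m\}$ for a positive proportion of $j$, with only good $k_i$ used. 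I expect to choose $k_i$ as the good indices in a block and let $j$ vary so that each $r\le m$ is hit, losing constants that are absorbed into the factor $2$ and the $2^{-12}$ threshold.

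\textbf{Part (iii).} With $a_n=\|T^n\|^p$ and $m=\lfloor N/8\rfloor$, \eqref{eq:positive-reciprocal-new} is exactly hypothesis \eqref{eq:reciprocal-growth} of Lemma~\ref{lem:reciprocal-sequence}, with $B=2M_p^pR^{2p}\sup_m\mathfrak C_{p,X}(m)^p$. The lemma yields $a_n\le Cn^{1-\delta}$, so $\|T^n\|\le C n^{1/p-\delta/p}$. Shrinking the exponent gain if necessary gives $c_T<1/p$. The nilpotent case is trivial.

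\textbf{Part (ii).} Theorem~\ref{thm:uniform-common}(ii) gives $\|T^r\|^p\le Ar$, so $\sum_{r\le m}\|T^r\|^{-p}\ge A^{-1}\log(m+1)/2$. Taking $m=\lfloor N/8\rfloor$ in (i) yields
\[
\|T^N\|^p\le C\,N\,\mathfrak C_{p,X}(m)^p/\log(m+1)=o(N),
\]
that is, $\|T^N\|=o(N^{1/p})$.
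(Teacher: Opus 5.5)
\textbf{Verdict: genuine gap in part (i).} Your parts (ii) and (iii) are fine and coincide with the paper's derivations from (i), via Theorem~\ref{thm:uniform-common}(ii) and Lemma~\ref{lem:reciprocal-sequence}. The gap is in part (i), at exactly the step you flag as the main obstacle. You never specify the vector and the pivots, and the arrangement you hint at does not work.

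Suppose $h$ is built from $\ell\le m$ good indices in a block and normalized by $\ell^{1/p}$. Then fewer than $2m$ pivots $j$ put any $N-j-k_i$ into $\{1,\dots,m\}$. Pivots with $j+k_i>N$ give no information at all, so ``many $j\le 2N$'' is not available. To contradict $\Phi_T(R)<2^{-12}$ you need more than $2^{-12}J$ large values among the pivots $1\le j\le J$. The tail function only controls initial time segments, and you cannot restart at a later time because $\|T^ah\|$ is not controlled. The exceptional set only has density $<2^{-12}$ up to time $N$, so it may swallow the whole window $[N-2m,N)$ when $m$ is small compared with $N$. That forces $J$ of order $N$, and then $O(m)$ useful pivots are far too few. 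This is also why the factor $N$, not $m$, must appear on the right of \eqref{eq:positive-reciprocal-new}. For polynomial shifts on $\ell^p$ with $\|T^n\|\asymp n^\gamma$, the left side is of order $N^{\gamma p}$ already for $m=1$.

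\textbf{The missing idea.} Use one universal vector that serves every pivot simultaneously, and apply $\mathfrak C_{p,X}(m)$ only after discarding terms. Take $w=\bigl(\sum_{k\in G}(T^kx)^p\bigr)^{1/p}$ over all of $G$; by $p$-convexity $\|w\|\le M_p(X)RN^{1/p}$. For a pivot $1\le j\le\lfloor N/4\rfloor$ put $I_j=\{1\le r\le m:N-j-r\in G\}$. The lattice Minkowski inequality (which you already have) and monotonicity give $\bigl(\sum_{r\in I_j}(T^{N-r}x)^p\bigr)^{1/p}\le T^jw$. This is a $p$-sum of at most $m$ terms, so with $S_m=\sum_{r\le m}\|T^r\|^{-p}$,
\[
\|T^Nx\|\Bigl(\sum_{r\in I_j}\|T^r\|^{-p}\Bigr)^{1/p}\le\mathfrak C_{p,X}(m)\,\|T^jw\|.
\]
Now double count $q_j=\sum_{r\le m,\;N-j-r\notin G}\|T^r\|^{-p}$. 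For fixed $r$ the map $j\mapsto N-j-r$ is injective, so $\sum_j q_j\le 2^{-12}NS_m$. Hence all but fewer than $2^{-11}N$ pivots satisfy $\sum_{r\in I_j}\|T^r\|^{-p}\ge S_m/2$. The density estimate for $w/\|w\|$ on $[1,\lfloor N/4\rfloor]$ excludes fewer than $2^{-12}\lfloor N/4\rfloor$ further pivots. A pivot with both properties has $\|T^jw\|\le R\|w\|$, and this gives
\[
\|T^Nx\|^pS_m\le 2M_p(X)^pR^{2p}N\,\mathfrak C_{p,X}(m)^p
\]
directly, with no contradiction argument needed. To keep the constant $2$, take the supremum over $x\in B_X\cap X_+$ rather than fixing $x$ with $\|T^Nx\|\ge\|T^N\|/2$, which loses a factor $2^p$.
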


\begin{proof}
We first prove \textup{(i)}. 
Since $\Phi_T(R)<2^{-12}$,
\[
 \frac1N\card\{1\leq j\leq N:\|T^jx\|>R\}<2^{-12}
\]
for every $x\in B_X$ and every $N\ge 1$. 
Since $T$ is not nilpotent, $\|T^r\|>0$ for every $r\ge 1$.

Fix $N,m\in\N$ with $1\le m\le N/8$, and let $x\in B_X\cap X_+$  such that $T^Nx\ne 0$.
Put
\[
 E=\|T^Nx\|,
 \quad
 H_N=\left\lfloor\frac N4\right\rfloor,
 \quad
 G=\{0\le k<N:\|T^kx\|\le R\}, \quad S_m=\sum_{r=1}^m\frac1{\|T^r\|^p}.
\]
Since $m\ge 1$, we have $N\ge8$, and hence $H_N\ge 1$. Moreover, $0\in G$. For $1\le j\le H_N$, put
\[
 q_j=\sum_{r=1}^m
 \frac{\mathbf1_{\{N-j-r\notin G\}}}{\|T^r\|^p}.
\]
For the indices under consideration, $0<N-j-r<N$. Applying the density estimate to $x$ with $N-1$ in place of $N$, we obtain
\[
 \card\{0\le k<N:k\notin G\}<2^{-12}(N-1)<2^{-12}N.
\]
For each fixed $r$, the map $j\mapsto N-j-r$ is injective. Therefore
\[
 \sum_{j=1}^{H_N}q_j\le2^{-12}NS_m,
\]
so fewer than $2^{-11}N$ indices $j$ satisfy $q_j>S_m/2$.

Set
\[
 w=\biggl(\sum_{k\in G}(T^kx)^p\biggr)^{1/p}.
\]
Since $0\in G$, we have $w\ne 0$. By $p$-convexity, $\|w\|\le M_p(X)RN^{1/p}$. Applying the density estimate to $w/\|w\|$ with $N=H_N$, fewer than $2^{-12}H_N$ indices $1\le j\le H_N$ satisfy
\[
 \|T^jw\|>R\|w\|.
\]
Since $H_N\ge N/8$, we have
\[
 2^{-11}N+2^{-12}H_N
 \le \left(2^{-8}+2^{-12}\right)H_N<H_N.
\]
Hence there is $1\le j\le H_N$ such that
\[
 q_j\le\frac{S_m}{2}
 \quad\text{and}\quad
 \|T^jw\|\le R\|w\|.
\]

Let $I=\{1\le r\le m:N-j-r\in G\}$. Then
\[
 \sum_{r\in I}\frac1{\|T^r\|^p}\ge\frac{S_m}{2}.
\]
For every $r\in I$, we have 
\[
 E=\|T^rT^{N-r}x\|\le\|T^r\|\,\|T^{N-r}x\|,
\]
and therefore
\[
 \sum_{r\in I}\|T^{N-r}x\|^p\ge\frac12E^pS_m.
\]
Put
\[
 Y=\biggl(\sum_{r\in I}(T^{N-r}x)^p\biggr)^{1/p}.
\]
By the definition of $\mathfrak C_{p,X}(m)$,
\[
 E\left(\frac{S_m}{2}\right)^{1/p}
 \le \mathfrak C_{p,X}(m)\|Y\|.
\]

We use the standard lattice Minkowski inequality
\[
 \biggl(\sum_i(Sy_i)^p\biggr)^{1/p}
 \le S\biggl(\biggl(\sum_i y_i^p\biggr)^{1/p}\biggr)
\]
for a positive operator $S$ and positive vectors $y_i$. For $p>1$, this follows from the finite lattice functional calculus by applying positivity to
\[
\sum_i\alpha_i y_i\le\biggl(\sum_i y_i^p\biggr)^{1/p}
\]
whenever $\alpha_i\ge0$ and $\|(\alpha_i)\|_q\le1$, where $q$ is conjugate to $p$.  For $p=1$ it is an equality. Since $N-j-r\in G$ for $r\in I$, it follows that
\[
 Y\le T^j\biggl(\biggl(\sum_{r\in I}(T^{N-j-r}x)^p\biggr)^{1/p}\biggr)
 \le T^jw.
\]
Consequently, we have 
\[
 \|Y\|\le R\|w\|\le M_p(X)R^2N^{1/p}.
\]
Combining the last two estimates and taking the $p$th power of both sides, we obtain 
\[
 E^pS_m
 \le 2M_p(X)^pR^{2p}N\,\mathfrak C_{p,X}(m)^p.
\]
The same estimate is trivial when $T^Nx=0$. 
Since $T^N$ is positive,
\[
 \|T^N\|=\sup_{x\in B_X\cap X_+}\|T^Nx\|.
\]
Taking the supremum over $x\in B_X\cap X_+$ proves \eqref{eq:positive-reciprocal-new}.

We next prove \textup{(ii)}. 
The conclusion is immediate if $T$ is nilpotent, so assume that $T$ is not nilpotent.
Put $a_n=\|T^n\|^p$. 
By Theorem~\ref{thm:uniform-common}\textup{(ii)}, there is $A\ge 1$ such that $a_n\le An$ for every $n\ge 1$. 
Taking $m=\lfloor N/8\rfloor$ in \eqref{eq:positive-reciprocal-new}, we obtain
\[
 \sum_{r=1}^m\frac1{a_r}
 \ge\frac1A\sum_{r=1}^m\frac1r
 \ge c\log(N+1)
\]
for $N\ge8$. Hence
\[
 \frac{a_N}{N}
 \le C\frac{\mathfrak C_{p,X}(\lfloor N/8\rfloor)^p}{\log(N+1)}.
\]
Thus $\mathfrak C_{p,X}(m)^p=o(\log(m+1))$ implies $a_N/N\to0$, and hence $\|T^N\|=o(N^{1/p})$.

Finally, suppose that $\sup_{m\ge 1} \mathfrak C_{p,X}(m)<\infty$.
Again, there is nothing to prove if $T$ is nilpotent.
Otherwise, \eqref{eq:positive-reciprocal-new} with $m=\lfloor N/8\rfloor$ satisfies the hypothesis of Lemma~\ref{lem:reciprocal-sequence}. 
Hence $a_n\le Cn^{1-\delta}$ for some $\delta>0$.
After replacing $\delta$ by a smaller number if necessary, we may assume $0<\delta<1$. 
Taking $p$th roots and putting $c_T=\delta/p$ yields
\[
 \|T^n\|\le Cn^{1/p-c_T},
\]
where $0<c_T<1/p$.
\end{proof}

Now we are ready to prove Theorem~\ref{thm:main-growth}.

\begin{proof}[Proof of Theorem~\ref{thm:main-growth}]
Part \textup{(i)} follows from Theorem~\ref{thm:uniform-common}\textup{(i)}. Since every Banach space has Rademacher type $1$, the estimate $\|T^n\|=O(n)$ holds without any additional geometric assumption.

Part \textup{(ii)} is Proposition~\ref{prop:Hilbert-strict-growth}.

For \textup{(iii)}, the $p$-convex estimate follows from Theorem~\ref{thm:uniform-common}\textup{(ii)}. If $X$ is an abstract $L^p$-space, then $\mathfrak C_{p,X}(m)=1$, and the strict estimate follows from Theorem~\ref{thm:positive-reciprocal}.
\end{proof}

\section{Weighted shifts on dyadic \texorpdfstring{$c_0$}{c0}-sums}\label{sec:critical}

We now prove Theorem~\ref{thm:critical-family-main}.  
Polynomial weighted shifts with weights $((m+1)/m)^\gamma$, $0<\gamma<1/p$, are absolutely Ces\`aro bounded on $\ell^p$, and the corresponding examples are topologically mixing \cite[Theorem~2.1 and Corollary~2.3]{BBMP20}.  
We take the critical exponent $\gamma=1/p$ and replace $\ell^p$ by the dyadic $c_0$-sum $X_p$. The outer $c_0$-norm separates the dyadic blocks and is crucial for the counting argument below.

Throughout this section, all spaces are complex.  
For $1\le p<\infty$, set
\[
 I_j=\{2^j,\ldots,2^{j+1}-1\},
 \quad
 X_p=\biggl(\bigoplus_{j\ge 0}\ell^p(I_j)\biggr)_{c_0}.
\]
Thus
\[
 \|x\|_{X_p}
 =\sup_{j\ge 0}\biggl(\sum_{m\in I_j}|x_m|^p\biggr)^{1/p},
\]
and the block norms tend to zero.  
Let $T_p$ be the unilateral weighted backward shift on $X_p$ with weights $\bigl(\frac{m+1}{m}\bigr)^{1/p}$, that is 
\[
 (T_px)_m=\left(\frac{m+1}{m}\right)^{1/p}x_{m+1}.
\]
Then
\[
 (T_p^nx)_m=\left(\frac{m+n}{m}\right)^{1/p}x_{m+n}.
\]
If $I_j=[a,2a)$, then
\[
 \|(T_px)|_{I_j}\|_p^p
 \le2\bigl(\|x|_{I_j}\|_p^p+\|x|_{I_{j+1}}\|_p^p\bigr).
\]
Hence $T_p$ maps $X_p$ into itself and $\|T_p\|\le4^{1/p}$.

Recall that an operator $T$ on a Banach space $X$ is \emph{topologically mixing} if for any nonempty open subsets $U$ and $V$ of $X$ there exists $N\ge 1$ such that $T^n(U)\cap V\neq\emptyset$ for all $n\geq N$.

\begin{proof}[Proof of Theorem~\ref{thm:critical-family-main}]
The space $X_p$ is $p$-convex with constant one.  For a finite family $x_1,\ldots,x_m\in X_p$,
\[
 \Biggl\|\biggl(\sum_{i=1}^m|x_i|^p\biggr)^{1/p}\Biggr\|_{X_p}^p
 =\sup_{j\ge 0}\sum_{i=1}^m\|x_i|_{I_j}\|_p^p
 \le\sum_{i=1}^m\|x_i\|_{X_p}^p.
\]
First take $p=1$ and write $X=X_1$ and $T=T_1$.  
The general case will then follow from a simple identity.  
The power formula above shows
\[
 T^ne_{n+1}=(n+1)e_1,
\]
so $\|T^n\|\ge n+1$.  
For the reverse estimate, fix an output block $I_j=[a,2a)$, where $a=2^j$.  
Then
\[
 \sum_{m=a}^{2a-1}|(T^nx)_m| \le\left(1+\frac na\right) \sum_{r=a+n}^{2a-1+n}|x_r|.
\]
The interval $[a+n,2a+n)$ has length $a$ and left endpoint at least $a$, hence it meets at most two dyadic blocks.  Therefore
\[
 \sum_{r=a+n}^{2a-1+n}|x_r|\le2\|x\|_X,
\]
and $1+n/a\le n+1$.  Taking the supremum over $j$ yields
\[
 n+1\le\|T^n\|\le2(n+1).
\]

We next prove the weak orbit estimate.  We use the one-sided discrete maximal function
\[
 M^+u(n)=\sup_{h\ge 1}\frac1h\sum_{r=n+1}^{n+h}u_r
\]
for finitely supported $u\ge 0$.  
After extending $u$ by zero to $\mathbb Z$, $M^+$ is pointwise dominated, up to an absolute constant, by the discrete Hardy--Littlewood maximal operator. The standard weak $(1,1)$ estimate for the latter \cite[Section~2.4]{Bakas21} therefore yields 
\[
\card\{n\in\N:M^+u(n)>\alpha\}
\le\frac{C}{\alpha}\sum_{r\ge 1}u_r,
\]
where $C>0$ is an absolute constant. 
Assume $\|x\|_X=1$. For $0<\lambda\le4$, the estimate follows after enlarging the absolute constant, so suppose that $\lambda>4$. 
If $\|T^nx\|>\lambda$, then for some dyadic $a$,
\[
 \sum_{m=a}^{2a-1}\frac{m+n}{m}|x_{m+n}|>\lambda.
\]
If $a\ge n$, the left-hand side is at most $4$, a contradiction. Thus $a<n$. 
For $a\le m<2a$ one has $m+n<3n$, and the preceding inequality implies
\[
 \sum_{r=n+a}^{n+2a-1}|x_r|>\frac{a\lambda}{3n}.
\]
Taking $h=2a$ in the definition of $M^+$, we obtain
\[
 M^+(|x|)(n)>\frac{\lambda}{6n}.
\]
If $2^q\le n<2^{q+1}$, then the interval $[n+1,n+2a]$ is contained in $[2^q,2^{q+3})$. 
Restrict $|x|$ to this three-block interval.  
Its $\ell^1$ norm is at most $3$, while the preceding maximal-function estimate implies a maximal average larger than $\lambda/(12\cdot 2^q)$.  
By the weak $(1,1)$ estimate for $M^+$, there is an absolute constant $C>0$ such that
\[
 \card\{2^q\le n<2^{q+1}:\|T^nx\|>\lambda\}
 \le C\frac{2^q}{\lambda}.
\]
Summing over the dyadic time blocks meeting $[1,N]$ yields
\[
 \frac1N\card\{1\le n\le N:\|T^nx\|>\lambda\}
 \le\frac{C}{\lambda}.
\]
Thus there is an absolute constant $C_0>0$ such that, by homogeneity,
\[
 \frac1N\card\{1\le n\le N:\|T^nx\|>\lambda\}
 \le C_0\frac{\|x\|}{\lambda}.
\]
In terms of the uniform density modulus introduced in Section~\ref{sec:prelim}, this estimate gives
\[
 \mathcal K_{T}(\eta)
 \le
 \max\left\{1,\frac{C_0}{\eta}\right\},
 \quad \forall \ 0<\eta<1.
\]
In particular, Proposition~\ref{prop:density-characterization} shows that $T$ is mean-L-stable.

We now pass to general $p$.  
For $p\ge 1$, define
\[
 \Gamma_p(x)=(|x_m|^p)_m.
\]
Then
\begin{equation}\label{eq:convexification-new}
 \|\Gamma_p(x)\|_{X_1}=\|x\|_{X_p}^p\text{ and } \Gamma_p(T_p^nx)=T_1^n\Gamma_p(x).
\end{equation}
Since every positive element of $X_1$ is $\Gamma_p(x)$ for some $x\in X_p$, positivity also yields
\[
 \|T_p^n\|^p=\|T_1^n\|.
\]
The $p=1$ estimate and this identity prove the asserted bound for $\|T_p^n\|$.  The weak estimate for $T_1$, together with \eqref{eq:convexification-new}, implies
\[
 \frac1N\card\{1\le n\le N:\|T_p^nx\|>\lambda\}
 \le C_0\left(\frac{\|x\|}{\lambda}\right)^p.
\]  
It also shows that $T_p$ is mean-L-stable. 

To prove topological mixing, let $X_{p,0}$ be the dense subspace of finitely supported vectors in $X_p$ and, for $y\in X_{p,0}$, define
\[
 (R_{p,n}y)_{m+n}=\left(\frac{m}{m+n}\right)^{1/p}y_m,
 \quad
 (R_{p,n}y)_r=0\text{ otherwise}.
\]
Then $T_p^nR_{p,n}y=y$, $R_{p,n}y\to0$, and $T_p^ny\to0$.  
To verify mixing directly, let $U,V\subset X_p$ be nonempty open sets and choose finitely supported $u\in U$ and $v\in V$.  For all sufficiently large $n$, $T_p^nu=0$ and $u+R_{p,n}v\in U$, while
\[
 T_p^n(u+R_{p,n}v)=v\in V.
\]
Thus $T_p^n(U)\cap V\ne\varnothing$ for every sufficiently large $n$.

It remains to estimate the averages of the orbit norms.  Normalize $\|x\|\le 1$.  The weak estimate and the power estimate already proved imply, for $1\le n\le N$,
\[
 \|T_p^nx\|\le C_pN^{1/p},
\]
and
\[
 \frac1N\card\{1\le n\le N:\|T_p^nx\|>\lambda\}
 \le\min\{1,C_0\lambda^{-p}\}.
\]
Using the layer-cake identity and the weak $(p,p)$ estimate, we obtain
\[
\begin{aligned}
\frac1N\sum_{n=1}^N\|T_p^nx\|^s
&=s\int_0^{C_pN^{1/p}}\lambda^{s-1}
\frac1N\card\{1\le n\le N:\|T_p^nx\|>\lambda\}\,\mathrm d\lambda\\
&\le  s\int_0^1\lambda^{s-1}\,\mathrm d\lambda
+sC_0\int_1^{C_pN^{1/p}}\lambda^{s-p-1}\,\mathrm d\lambda.
\end{aligned}
\]
The last expression is uniformly bounded in $N$ when $s<p$, is $O(\log(N+1))$ when $s=p$, and is $O(N^{s/p-1})$ when $s>p$. This proves the required upper bounds.

For the lower bounds, take $x=e_{N+1}$.  Then for $1\le n\le N$,
\[
 \|T_p^ne_{N+1}\|
 =\left(\frac{N+1}{N+1-n}\right)^{1/p}.
\]
Therefore
\[
 \frac1N\sum_{n=1}^N\|T_p^ne_{N+1}\|^s
 =\frac{(N+1)^{s/p}}{N}\sum_{k=1}^Nk^{-s/p}.
\]
The three cases $s<p$, $s=p$, and $s>p$ now follow from the standard estimates for this power sum.  
This proves the asserted estimates for the averages. Taking $s=1$ and adding the term $n=0$ shows that $T_p$ is absolutely Ces\`aro bounded for $p>1$, whereas for $p=1$ the supremum of the corresponding Ces\`aro averages grows like $\log(N+1)$.
\end{proof}

For completeness, we compute the two lattice quantities used above for $\ell^p$ and $X_p$. These computations also clarify why the general $p$-convex estimate is sharp on $X_p$.

For positive $x_1,\ldots,x_n\in B_{X_p}$,
\[
 \|x_1\vee\cdots\vee x_n\|_{X_p}^p
 \le\biggl\|\sum_{i=1}^n x_i^p\biggr\|_{X_1}\le n,
\]
so $\Env_{X_p}(n)\le n^{1/p}$.  Distinct coordinate vectors inside one sufficiently large dyadic block attain equality.  Thus $\Env_{X_p}(n)=n^{1/p}$.

For $\ell^p$,
\[
 \Biggl\|\biggl(\sum_{i=1}^m|y_i|^p\biggr)^{1/p}\Biggr\|_p^p
 =\sum_{i=1}^m\|y_i\|_p^p,
\]
so $\mathfrak C_{p,\ell^p}(m)=1$.  
On $X_p$, monotonicity yields the upper bound $m^{1/p}$.
For the reverse inequality, choose one coordinate vector in each of $m$ distinct dyadic blocks.  
The individual norms are one, whereas the lattice $p$-sum has $X_p$-norm one.  
Hence $\mathfrak C_{p,X_p}(m)=m^{1/p}$.

Thus $\Env_{X_p}(n)=n^{1/p}$ and $\mathfrak C_{p,X_p}(m)=m^{1/p}$.  
The estimate from Theorem~\ref{thm:uniform-common}\textup{(ii)} therefore has the same order as $\|T_p^n\|$.  
Also, the reciprocal estimate in Theorem~\ref{thm:positive-reciprocal} does not lower this power.  
For the classical space $c_0$, one has $\Env_{c_0}(n)=1$ because $c_0$ is an AM-space.

\section{Mean Li--Yorke chaos and distributional chaos}\label{sec:chaos}

We conclude with two applications to linear chaos. 
Throughout this section, distributional chaos means distributional chaos of type $1$. 
Let $T$ be an operator on a Banach space $X$. 
A pair $(x,y)\in X\times X$ is called a \emph{mean Li--Yorke pair} if
\[
\liminf_{N\to\infty}\frac1N\sum_{n=1}^N\|T^nx-T^ny\|=0
\text{ and }
\limsup_{N\to\infty}\frac1N\sum_{n=1}^N\|T^nx-T^ny\|>0.
\]
The operator $T$ is \emph{mean Li--Yorke chaotic} if there is an uncountable set $S\subset X$ such that every pair of distinct points in $S$ is a mean Li--Yorke pair. It is \emph{densely mean Li--Yorke chaotic} if such an $S$ can be chosen dense in $X$.

A pair $(x,y)$ is \emph{distributionally chaotic} if
\[
\dens\{n\in\N:\|T^nx-T^ny\|<\varepsilon\}=1
\]
for every $\varepsilon>0$, and there is $\delta>0$ such that
\[
\dens\{n\in\N:\|T^nx-T^ny\|>\delta\}=1.
\]
The operator is called distributionally chaotic if it admits an uncountable scrambled set of such pairs, and densely distributionally chaotic if the scrambled set can be chosen dense.

For continuous maps on compact metric spaces, mean Li--Yorke chaos is equivalent to distributional chaos of type $2$ \cite{D14}, while distributional chaos of type $1$ is stronger \cite{SS04}. 
For operators on Banach spaces, the notions of distributional chaos of types $1$ and $2$ are equivalent \cite[Theorem~2]{BBPW18}, but this property is not equivalent to mean Li--Yorke chaos. 
In particular, \cite[Theorem~25]{BBPW18} constructs distributionally chaotic operators which are not mean Li--Yorke chaotic; see also \cite[Example~13]{BBP20}. 
Bernardes, Bonilla and Peris asked whether the opposite separation can occur, namely, whether there exists a mean Li--Yorke chaotic operator that is not distributionally chaotic \cite[Question~16]{BBP20}.

For later use, we put
\[
\mathcal M_0(T)=\biggl\{x\in X\colon \liminf_{N\to\infty}\frac1N\sum_{n=1}^N\|T^nx\|=0\biggr\}.
\]
If $X$ is separable, Theorems~4 and~17 of \cite{BBP20} imply that $T$ is densely mean Li--Yorke chaotic if and only if $T$ is not absolutely Ces\`aro bounded and $\mathcal M_0(T)$ is dense in $X$. Indeed, Theorem~4 of \cite{BBP20} shows that failure of absolute Ces\`aro boundedness yields a residual set of vectors with unbounded Ces\`aro orbit averages, while $\mathcal M_0(T)$ is a $G_\delta$ set. Hence, if $\mathcal M_0(T)$ is dense, then it is residual.

Recall that a pair $(x,y)$ is \emph{distributionally asymptotic} if 
\[
\dens\{n\in\N\colon \|T^nx-T^ny\|>\varepsilon\}=0
\]
for every $\varepsilon>0$.

\begin{proposition}\label{prop:T1-chaos}
Let $T_1$ be the operator on $X_1$ in Theorem~\ref{thm:critical-family-main}. 
Then $T_1$ is densely mean Li--Yorke chaotic but not distributionally chaotic. 
More precisely, every pair in $X_1\times X_1$ is distributionally asymptotic.
\end{proposition}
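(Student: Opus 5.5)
Three things must be shown: every pair in $X_1\times X_1$ is distributionally asymptotic, this excludes distributional chaos, and $T_1$ is densely mean Li--Yorke chaotic.

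\emph{Distributional asymptoticity.} By linearity it suffices to show that for every $x\in X_1$ and $\varepsilon>0$,
\[
\dens\{n:\|T_1^nx\|>\varepsilon\}=0.
\]
Fix $\eta>0$ and choose a finitely supported $y$ with $\|x-y\|<\eta\varepsilon/(2C_0)$. Since $y$ is finitely supported, $T_1^ny=0$ for all large $n$. Apply the weak $(1,1)$ orbit estimate of Theorem~\ref{thm:critical-family-main}(ii) to $x-y$ with $\lambda=\varepsilon/2$. Then for all large $N$,
\[
\frac1N\card\{1\le n\le N:\|T_1^nx\|>\varepsilon\}\le 2C_0\|x-y\|/\varepsilon<\eta .
\]
Hence the upper density is $0$. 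A distributionally chaotic pair would need $\dens\{n:\|T^nx-T^ny\|>\delta\}=1$ for some $\delta>0$, which is now impossible. So $T_1$ is not distributionally chaotic.

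\emph{Dense mean Li--Yorke chaos.} $X_1$ is separable, being a $c_0$-sum of finite-dimensional spaces. We use the criterion quoted above from Theorems~4 and~17 of \cite{BBP20}: it suffices that $T_1$ is not absolutely Ces\`aro bounded and that $\mathcal M_0(T_1)$ is dense.
\begin{itemize}
\item Failure of absolute Ces\`aro boundedness is Theorem~\ref{thm:critical-family-main}(iii) with $s=p=1$. The averages $\frac1N\sum\|T_1^ne_{N+1}\|$ grow like $\log(N+1)$, so they are not uniformly bounded.
\item Density of $\mathcal M_0(T_1)$ follows because every finitely supported vector $y$ satisfies $T_1^ny=0$ eventually. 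Then $\frac1N\sum_{n\le N}\|T_1^ny\|\to0$, so $y\in\mathcal M_0(T_1)$. Finitely supported vectors are dense in $X_1$, since block norms tend to zero and each block is finite-dimensional.
\end{itemize}

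The criterion then yields dense mean Li--Yorke chaos. The only point needing care is that the cited equivalence is used exactly as stated in the paragraph preceding the proposition. If one prefers to avoid it, one can instead construct a residual set of vectors whose Ces\`aro orbit averages are unbounded (Theorem~4 of \cite{BBP20}). That set is intersected with the residual $G_\delta$ set $\mathcal M_0(T_1)$, and an uncountable dense scrambled set is then extracted via Theorem~17 of \cite{BBP20}. I expect no substantial obstacle beyond checking these hypotheses. The key structural input is the uniform weak-type estimate, which makes mean-L-stability effective enough to force density-zero exceptional sets for every vector.
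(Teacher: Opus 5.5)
Your proof is correct and follows the paper's argument. Dense mean Li--Yorke chaos comes from the \cite{BBP20} criterion: absolute Ces\`aro boundedness fails by Theorem~\ref{thm:critical-family-main}(iii), and the dense set of finitely supported vectors lies in $\mathcal M_0(T_1)$. Distributional asymptoticity comes from approximating $x$ by finitely supported vectors and applying the weak $(1,1)$ orbit estimate.

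One cosmetic slip: the bound $\frac1N\card\{1\le n\le N:\|T_1^nx\|>\varepsilon\}\le 2C_0\|x-y\|/\varepsilon$ should carry an extra term $n_0/N$ for the finitely many $n$ with $T_1^ny\ne0$. That term disappears in the upper density, so your conclusion stands, and $\lambda=\varepsilon$ would have sufficed.
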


\begin{proof}
By Theorem~\ref{thm:critical-family-main}, $T_1$ is not absolutely Ces\`aro bounded. The space $X_1$ is separable, and its finitely supported vectors are dense. Every such vector belongs to $\mathcal M_0(T_1)$, since its orbit is eventually zero. Hence $\mathcal M_0(T_1)$ is dense in $X_1$, and the preceding characterization shows that $T_1$ is densely mean Li--Yorke chaotic.

It remains to prove the stronger assertion that every pair is distributionally asymptotic. 
Fix $x\in X_1$ and $\varepsilon>0$. Let $z$ be finitely supported. Since $T_1^nz=0$ for all sufficiently large $n$, the sets
\[
\{n\in\N:\|T_1^nx\|>\varepsilon\}
\quad\text{and}\quad
\{n\in\N:\|T_1^n(x-z)\|>\varepsilon\}
\]
differ by at most finitely many integers. The weak orbit estimate in Theorem~\ref{thm:critical-family-main} therefore yields
\[
\dens\{n\in\N:\|T_1^nx\|>\varepsilon\}
\le C_0\frac{\|x-z\|}{\varepsilon}.
\]
Letting $z\to x$ through finitely supported vectors, we obtain
\[
\dens\{n\in\N:\|T_1^nx\|>\varepsilon\}=0.
\]
Applying this conclusion to $x-y$ shows that, for every $x,y\in X_1$ and every $\varepsilon>0$, 
\[
\dens\{n\in\N:\|T_1^nx-T_1^ny\|>\varepsilon\}=0.
\]
Thus every pair is distributionally asymptotic. 
In particular, $T_1$ has no distributionally chaotic pair.
\end{proof}

Proposition~\ref{prop:T1-chaos} answers Question~16 of \cite{BBP20}; see also Question~\ref{ques:mean-chaos} in the present paper. 
The example has the stronger property that all pairs are distributionally asymptotic.

We also obtain the converse implication under the geometric and order assumptions for which mean-L-stability and absolute Ces\`aro boundedness coincide. 
Recall that a pair $(x,y)$ is \emph{distributionally proximal} if
\[
\dens\{n\in\N:\|T^nx-T^ny\|<\varepsilon\}=1
\]
for every $\varepsilon>0$.

\begin{proposition}\label{prop:dense-MLY-DC}
Let $T$ be a densely mean Li--Yorke chaotic operator on a separable Banach space $X$. Assume that one of the following holds.
\begin{enumerate}[label=\textup{(\roman*)}]
\item $X$ is a Hilbert space.
\item $X$ is an AM-space and $T$ is positive.
\item $X$ is an AL-space and $T$ is positive.
\end{enumerate}
Then $T$ is densely distributionally chaotic.
\end{proposition}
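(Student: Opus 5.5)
Since $T$ is densely mean Li--Yorke chaotic on a separable space, the characterization recalled before Proposition~\ref{prop:T1-chaos} (from Theorems~4 and~17 of \cite{BBP20}) says that $T$ is not absolutely Ces\`aro bounded and that $\mathcal M_0(T)$ is dense. In each of the three cases, Theorem~\ref{thm:relationships} makes absolute Ces\`aro boundedness equivalent to mean-L-stability. This holds by part~(ii) in the Hilbert and AL cases. In the AM case it holds by part~(i), where even power boundedness is equivalent. Hence $T$ is \emph{not} mean-L-stable. Proposition~\ref{prop:MLS-dichotomy} then gives a vector $x_0$ and a set $A$ with $\dens(A)=1$ such that $\|T^nx_0\|\to\infty$ along $A$. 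In particular $x_0$ has a distributionally unbounded orbit.

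Next I would produce distributionally proximal behaviour on a dense set. Let $x\in\mathcal M_0(T)$, so $\liminf_N \frac1N\sum_{n\le N}\|T^nx\|=0$. By Markov's inequality, for each $\varepsilon>0$ we get $\dens\{n:\|T^nx\|<\varepsilon\}=1$, taken along the subsequence of $N$ realizing the liminf. Here I would be careful that the upper density is computed along one sequence $N_k$ working simultaneously for all $\varepsilon$. This is automatic, because the liminf is $0$ along a single sequence $N_k$. Thus every vector of $\mathcal M_0(T)$ is distributionally near to zero, and $\mathcal M_0(T)$ is a dense $G_\delta$, hence residual.

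Finally I would invoke the known criterion for dense distributional chaos of operators, from \cite{BBPW18} and \cite{BBMP13}. It says that if the set of distributionally near-to-zero vectors is dense, and some vector has a distributionally unbounded orbit, then $T$ is densely distributionally chaotic. The standard residuality argument runs as follows. Let $D$ be the set of vectors $y$ with an orbit tending to infinity along a set of upper density one. Non-mean-L-stability, through a Baire argument in the spirit of \cite{JL25}, makes $D$ residual. The intersection $\mathcal M_0$-type set $\cap D$ is then residual. Its points are distributionally irregular vectors, and a dense uncountable linear-type scrambled set follows as in \cite[Theorem~15]{BBPW18}. Alternatively, I would cite this directly as the dense version of the distributional-chaos criterion.

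The main obstacle is this last step. One must pass from a single vector $x_0$ with distributionally unbounded orbit, together with the dense set $\mathcal M_0(T)$, to a residual set of distributionally irregular vectors. This in turn requires the two density-one sets, one for smallness and one for largeness, to be realized along possibly different sequences $N$. Here I would rely on the established criterion rather than reprove it. I would check that its hypotheses, namely a dense set of distributionally near-to-zero vectors and a distributionally unbounded orbit, are exactly what the two preceding steps supply.
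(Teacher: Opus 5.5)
Your proposal is correct and follows the paper's proof. You use the \cite{BBP20} characterization to get failure of absolute Ces\`aro boundedness and density of $\mathcal M_0(T)$, convert this to failure of mean-L-stability via Theorem~\ref{thm:relationships}, apply Markov's inequality to see that points of $\mathcal M_0(T)$ are distributionally near to zero, and close with an established dense-distributional-chaos criterion. The only difference is that last black box: the paper passes through density of the distributionally proximal relation in $X\times X$ (via $U-V$) and cites \cite[Theorems~5.13, 5.41 and~5.42]{JL25}, whereas you use the linearly equivalent dense set of distributionally near-to-zero vectors, the distributionally unbounded orbit from Proposition~\ref{prop:MLS-dichotomy}, and the residuality and scrambled-set argument you sketch.
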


\begin{proof}
Since $T$ is densely mean Li--Yorke chaotic, the characterization above shows that $T$ is not absolutely Ces\`aro bounded and that $\mathcal M_0(T)$ is dense in $X$. 
Theorem~\ref{thm:relationships} yields that absolute Ces\`aro boundedness and mean-L-stability are equivalent in all three cases. 
It follows that $T$ is not mean-L-stable.

We claim that the distributionally proximal relation of $T$ is dense in $X\times X$. 
Let $x\in\mathcal M_0(T)$. 
There is an increasing sequence $(N_k)$ such that
\[
\frac1{N_k}\sum_{n=1}^{N_k}\|T^nx\|\longrightarrow0.
\]
For every $\varepsilon>0$, Markov's inequality gives
\[
\frac1{N_k}\card\{1\le n\le N_k:\|T^nx\|\ge\varepsilon\}
\le\frac1{\varepsilon N_k}\sum_{n=1}^{N_k}\|T^nx\|\longrightarrow0.
\]
Hence
\[
\dens\{n\in\N:\|T^nx\|<\varepsilon\}=1
\]
for every $\varepsilon>0$, so $(x,0)$ is distributionally proximal.

Now let $U,V\subset X$ be nonempty open sets. Since $U-V$ is a nonempty open set and $\mathcal M_0(T)$ is dense, choose
\[
z\in\mathcal M_0(T)\cap(U-V).
\]
Write $z=u-v$ with $u\in U$ and $v\in V$. Since $(z,0)$ is distributionally proximal, so is $(u,v)$. 
Thus the distributionally proximal relation is dense in $X\times X$.

 By the mean-L-stability dichotomy and the characterization of dense distributional chaos in \cite[Theorems~5.13, 5.41 and~5.42]{JL25}, 
 the fact that $T$ is not mean-L-stable and has a dense distributionally proximal relation implies that $T$ is densely distributionally chaotic.
\end{proof}

For Hilbert spaces, the density assumption can be removed.

\begin{corollary}\label{cor:Hilbert-MLY-DC}
Every mean Li--Yorke chaotic operator on a Hilbert space is distributionally chaotic.
\end{corollary}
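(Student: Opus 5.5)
The plan is to drop the separability and density hypotheses of Proposition~\ref{prop:dense-MLY-DC}. I would work directly with a mean Li--Yorke pair and combine it with the Hilbert-space equivalence of Theorem~\ref{thm:Hilbert-position}.

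First, let $T$ be mean Li--Yorke chaotic on a Hilbert space $H$. Choose two distinct points $x,y$ of the scrambled set and put $z=x-y$. Then
\[
\liminf_{N\to\infty}\frac1N\sum_{n=1}^N\|T^nz\|=0 \quad\text{and}\quad \limsup_{N\to\infty}\frac1N\sum_{n=1}^N\|T^nz\|>0.
\]
The first relation, together with Markov's inequality exactly as in the proof of Proposition~\ref{prop:dense-MLY-DC}, shows that $(z,0)$ is distributionally proximal, so $T$ has a nonzero distributionally proximal vector.

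Second, I must show that $T$ is not mean-L-stable. By Theorem~\ref{thm:Hilbert-position} this is the same as showing that $T$ is not absolutely Ces\`aro bounded. So I would argue by contradiction: assume $T$ is absolutely Ces\`aro bounded, hence Ces\`aro square bounded. The aim is then to show that absolute Ces\`aro boundedness rules out mean Li--Yorke pairs.

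\begin{itemize}
\item Let $Y=\mathcal M_0(T)$. It is a linear subspace, since the Ces\`aro averages of $\|T^nx\|$ are subadditive in $x$ and the liminf passes through only along common subsequences.
\item A cleaner route is \cite[Theorem~17]{BBP20}. As I recall it, $T$ is mean Li--Yorke chaotic if and only if it admits an absolutely mean irregular vector, meaning a vector whose Ces\`aro orbit-norm averages have liminf $0$ and limsup $\infty$.
\item Absolute Ces\`aro boundedness gives $\sup_N \frac1N\sum_{n=1}^N\|T^nx\|\le C\|x\|$ for every $x$, so the limsup can never be infinite. Hence no absolutely mean irregular vector exists.
\end{itemize}

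This contradiction shows that $T$ is not absolutely Ces\`aro bounded, and therefore, by Theorem~\ref{thm:Hilbert-position}, not mean-L-stable.

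Third, I would conclude with \cite{JL25}. The results there characterize distributional chaos for non-mean-L-stable operators via the dichotomy in Proposition~\ref{prop:MLS-dichotomy}. By that dichotomy there are $w\in H$ and $A\subset\N$ with $\dens(A)=1$ and $\|T^nw\|\to\infty$ along $A$. Combined with a distributionally proximal vector, \cite[Theorems~5.13 and~5.41]{JL25} should give a distributionally irregular vector, and hence distributional chaos by the standard result of \cite{BBPW18}. Separability is not an obstacle here: I would restrict $T$ to the closed separable $T$-invariant subspace generated by the orbits of $z$ and $w$.

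The main obstacle is the second step. I need the exact form of the characterization in \cite{BBP20}: mean Li--Yorke chaos is equivalent to the existence of an absolutely mean irregular vector, with limsup equal to $\infty$ and not merely positive. If that is available, absolute Ces\`aro boundedness immediately excludes it.

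A secondary point is the restriction to an invariant subspace. Mean-L-stability of $T$ on $H$ implies mean-L-stability of the restriction, but the implication I need runs the other way: non-mean-L-stability of $T$ must pass to the restriction. This is handled by choosing the subspace to contain both the irregular vector and the vector $w$ from the dichotomy, so the failure of mean-L-stability is witnessed inside the subspace. The restriction is an operator on a separable Hilbert space, so Theorem~\ref{thm:Hilbert-position} still applies to it.
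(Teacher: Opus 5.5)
There is a genuine gap, and it is in your third step. You want \cite{JL25} to give the following: a non-mean-L-stable operator with a nonzero distributionally proximal vector has a distributionally irregular vector. This implication is false. For example, $T=2I\oplus0$ on $\ell^2\oplus\ell^2$ is not mean-L-stable and sends every $(0,b)$ to $0$. Yet $T^n(a,b)=(2^na,0)$ for $n\ge1$, so every orbit is either eventually zero or tends to infinity in norm, and there is no distributionally chaotic pair.

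The criterion the paper takes from \cite{JL25} (in Proposition~\ref{prop:dense-MLY-DC}) needs the distributionally proximal relation to be \emph{dense} in the space on which mean-L-stability fails. Adding the dichotomy vector $w$ to your subspace does not supply this. Proposition~\ref{prop:MLS-dichotomy} is a bare existence statement, and $w$ need not be related to $z$. Take $T=T'\oplus2I$ on $H'\oplus\mathbb C$ with $T'$ mean Li--Yorke chaotic, $z\in H'$ and $w=(0,1)$. Then every distributionally proximal pair in $Y'=\overline{\operatorname{span}}\{T^nz,T^nw:n\ge0\}$ has equal second coordinates, so the proximal relation is not dense in $Y'\times Y'$. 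Moreover, the only witness of non-mean-L-stability your argument has lives in the irrelevant summand.

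Step~2, by contrast, is not the real obstacle: \cite{BBP20} does prove that mean Li--Yorke chaos is equivalent to the existence of an absolutely mean irregular vector. Your side claim that $\mathcal M_0(T)$ is a linear subspace is false in general, though you do not use it.

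The missing idea is to obtain the failure of mean-L-stability inside a subspace where proximal vectors are dense, rather than importing it from $H$. Take $z$ absolutely mean irregular and let $Y=\overline{\operatorname{span}}\{T^kz:k\ge0\}$, a separable closed invariant subspace. If $\frac1{N_j}\sum_{n=1}^{N_j}\|T^nz\|\to0$, then
\[
\frac1{N_j}\sum_{n=1}^{N_j}\|T^{n+k}z\|\le\Bigl(1+\sum_{i=1}^k\|T\|^i\Bigr)\frac1{N_j}\sum_{n=1}^{N_j}\|T^nz\|\longrightarrow0.
\]
Hence every $p(T)z$ lies in $\mathcal M_0(T|_Y)$ along the same sequence, and $\mathcal M_0(T|_Y)$ is dense in $Y$. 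Since $z\in Y$ has unbounded Ces\`aro averages, $T|_Y$ is not absolutely Ces\`aro bounded. By Theorem~\ref{thm:Hilbert-position} applied to the Hilbert space $Y$, it is therefore not mean-L-stable. These are exactly the two facts on which the proof of Proposition~\ref{prop:dense-MLY-DC} runs, so $T|_Y$, and therefore $T$, is distributionally chaotic. This is the paper's proof, which obtains $Y$ from the proof of \cite[Theorem~19]{BBP20}.
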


\begin{proof}
Let $T$ be mean Li--Yorke chaotic on a Hilbert space $H$. By the proof of \cite[Theorem~19]{BBP20}, there is an infinite-dimensional separable closed $T$-invariant subspace $Y$ of $H$ such that $T|_Y$ is densely mean Li--Yorke chaotic. Since $Y$ is a Hilbert space, Proposition~\ref{prop:dense-MLY-DC} shows that $T|_Y$ is densely distributionally chaotic. Hence $T$ is distributionally chaotic.
\end{proof}

\subsection*{Acknowledgments} 
This research was supported by the National Natural Science Foundation of China (Grant Nos. 12222110, 12031019).


\begin{thebibliography}{99}

\bibitem{AK16}
F. Albiac and N. J. Kalton,
\emph{Topics in Banach Space Theory},
2nd ed., Graduate Texts in Mathematics, vol.~233, Springer, Cham, 2016.

\bibitem{AS16}
A. Aleman and L. Suciu,
\emph{On ergodic operator means in Banach spaces},
Integral Equations Operator Theory \textbf{85} (2016), no.~2, 259--287.

\bibitem{AC26}
L. Arnold and C. Cuny,
\emph{On the growth rate of powers of a strongly Kreiss bounded operator on an $L^p$-space},
Studia Math. \textbf{288} (2026), no.~1, 1--25.

\bibitem{Bakas21}
O.~Bakas, 
\emph{On a Problem of Pichorides},
J. Geom. Anal. \textbf{31} (2021), 7455--7512.

\bibitem{BBMP20}
T. Berm\'udez, A. Bonilla, V. M\"uller and A. Peris,
\emph{Ces\`aro bounded operators in Banach spaces},
J. Anal. Math. \textbf{140} (2020), no.~1, 187--206.

\bibitem{BBMP13}
N. C. Bernardes Jr.,  A. Bonilla,  V. M\"uller and A. Peris, \emph{Distributional chaos for linear operators}, 
J. Funct. Anal. \textbf{265} (2013), no. 9, 2143--2163.

\bibitem{BBP20}
N. C. Bernardes Jr., A. Bonilla and A. Peris,
\emph{Mean Li--Yorke chaos in Banach spaces},
J. Funct. Anal. \textbf{278} (2020), no.~3, Paper No.~108343.

\bibitem{BBPW18}
N. C. Bernardes Jr., A. Bonilla,  A. Peris and X. Wu,  \emph{Distributional chaos for operators on Banach spaces}, J. Math. Anal. Appl. \textbf{459} (2018), no. 2, 797--821. 

\bibitem{BM21}
A. Bonilla and V. M\"uller,
\emph{Kreiss bounded and uniformly Kreiss bounded operators},
Rev. Mat. Complut. \textbf{34} (2021), no.~2, 469--487.

\bibitem{CCEL20}
G. Cohen, C. Cuny, T. Eisner and M. Lin,
\emph{Resolvent conditions and growth of powers of operators},
J. Math. Anal. Appl. \textbf{487} (2020), no.~2, Paper No.~124035.

\bibitem{C20}
C. Cuny,
\emph{Resolvent conditions and growth of powers of operators on $L^p$ spaces},
Pure Appl. Funct. Anal. \textbf{5} (2020), no.~5, 1025--1038.

\bibitem{DLV24}
C. Deng, E. Lorist and M. Veraar,
\emph{Strongly Kreiss bounded operators in UMD Banach spaces},
Semigroup Forum \textbf{108} (2024), no.~3, 594--625.

\bibitem{D14}
T. Downarowicz, \emph{Positive topological entropy implies chaos DC2}, Proc. Amer. Math. Soc. \textbf{142} (2014), no. 1, 137--149.

\bibitem{EFR02}
O. El-Fallah and T. Ransford,
\emph{Extremal growth of powers of operators satisfying resolvent conditions of Kreiss--Ritt type},
J. Funct. Anal. \textbf{196} (2002), no.~1, 135--154.

\bibitem{F51}
S. Fomin,
\emph{On dynamical systems with a purely point spectrum},
Dokl. Akad. Nauk SSSR (N.S.) \textbf{77} (1951), 29--32 (in Russian).

\bibitem{JL25} 
Z. Jiang and J. Li,  \emph{Chaos for endomorphisms of completely metrizable groups and linear operators on Fréchet spaces}, J. Math. Anal. Appl. \textbf{543} (2025), no. 2, Paper No. 129033. 

\bibitem{K85} 
J.-P. Kahane,  \emph{Some random series of functions}, Second edition, Cambridge Studies in Advanced Mathematics, vol.~5. Cambridge University Press, Cambridge, 1985.

\bibitem{L24}
J. Li, \emph{Trichotomy for the orbits of a hypercyclic operator on a Banach space}, 
Proc. Amer. Math. Soc. \textbf{152} (2024), no. 12, 5207--5217.

\bibitem{LTY15}
J. Li, S. Tu and X. Ye,
\emph{Mean equicontinuity and mean sensitivity},
Ergodic Theory Dynam. Systems \textbf{35} (2015), no.~8, 2587--2612.

\bibitem{LWZ26}
J. Li, X. Wang and J. Zhao,
\emph{Density properties of orbits for a hypercyclic operator on a Banach space},
Canad. Math. Bull. \textbf{69} (2026), no.~1, 61--76.

\bibitem{LY16}
J. Li and X. Ye,
\emph{Recent development of chaos theory in topological dynamics}, Acta Math. Sin. (Engl. Ser.) \textbf{32} (2016), no. 1, 83--114. 

\bibitem{LH15} 
L. Luo and B. Hou, 
\emph{Some remarks on distributional chaos for bounded linear operators}, Turkish J. Math. \textbf{39} (2015), no. 2, 251--258.

\bibitem{M91}
P. Meyer-Nieberg,
\emph{Banach Lattices},
Universitext, Springer-Verlag, Berlin, 1991.

\bibitem{MSZ05}
A. Montes-Rodr\'iguez, J. S\'anchez-\'Alvarez and J. Zem\'anek,
\emph{Uniform Abel--Kreiss boundedness and the extremal behaviour of the Volterra operator},
Proc. Lond. Math. Soc. (3) \textbf{91} (2005), no.~3, 761--788.

\bibitem{SS04} 
J. Sm\'ital and M. \v{S}tef\'ankov\'a, \emph{Distributional chaos for triangular maps},
 Chaos Solitons Fractals \textbf{21} (2004), no. 5, 1125--1128.

\bibitem{V83}
J. A. van Casteren,
\emph{Operators similar to unitary or selfadjoint ones},
Pacific J. Math. \textbf{104} (1983), no.~1, 241--255.

\bibitem{V97}
J. A. van Casteren,
\emph{Boundedness properties of resolvents and semigroups of operators},
in: Linear Operators (Warsaw, 1994), Banach Center Publ. \textbf{38},
Polish Acad. Sci. Inst. Math., Warsaw, 1997, 59--74.

\end{thebibliography}
\end{document}